\newtheorem{theorem}{Theorem}[section]
\newtheorem{proposition}[theorem]{Proposition}
\newtheorem{lemma}[theorem]{Lemma}
\newtheorem{corollary}[theorem]{Corollary}
\theoremstyle{definition}
\newtheorem{remark}[theorem]{Remark}
\newtheorem{example}[theorem]{Example}
\newtheorem{definition}[theorem]{Definition}
\newcommand{\os}{\mathbin{\overline{*}}}
\newcommand{\us}{\mathbin{\underline{*}}}
\newcommand{\type}{\operatorname{type}}
\newcommand{\flow}{\mathrm{Flow}}
\newcommand{\col}{\mathrm{Col}}
\newcommand{\rank}{\operatorname{rank}}
\newcommand{\ol}{\overline}
\begin{document}

\title{The Gordian distance of handlebody-knots and Alexander biquandle colorings}

\author{Tomo \textsc{Murao}}
\address[T. Murao]
{Institute of Mathematics, University of Tsukuba,\\
1-1-1 Tennoudai, Tsukuba, Ibaraki 305-8571, Japan.}
\email{t-murao@math.tsukuba.ac.jp}

\subjclass[2010]{Primary 57M25; Secondary 57M15, 57M27}

\keywords{handlebody-knot, biquandle, Gordian distance, unknotting number}

\date{}

\begin{abstract}
We give lower bounds for the Gordian distance and the unknotting number 
of handlebody-knots 
by using Alexander biquandle colorings.
We construct handlebody-knots with Gordian distance $n$ 
and unknotting number $n$ for any positive integer $n$.
\end{abstract}

\maketitle

\section{Introduction}
The Gordian distance of two classical knots 
is the minimal number of crossing changes neened to be deformed each other.
In particular, 
we call the Gordian distance of a classical knot and the trivial one 
the unknotting number of the classical knot.
Clark, Elhamdadi, Saito and Yeatman \cite{CESY14} gave a lower bound for 
the Nakanishi index \cite{Nakanishi81}, 
which induced a lower bound for 
the unknotting number of classical knots.
This is an generalization of the Przytycki's result \cite{Przytycki98}.
In this paper, 
we give lower bounds for the Gordian distance and the unknotting number 
of handlebody-knots, 
which is a generalization of a classical knot with respect to a genus.

Ishii \cite{Ishii08} introduced an enhanced constituent link of a spatial trivalent graph, 
and Ishii and Iwakiri \cite{II12} introduced an $A$-flow of a spatial graph, 
where $A$ is an abelian group, 
to define colorings and invariants of handlebody-knots.
Iwakiri \cite{Iwakiri15} gave a lower bound for the unknotting number of handlebody-knots 
by using Alexander quandle colorings of 
its $\mathbb{Z}_2$ or $\mathbb{Z}_3$-flowed diagram.
Ishii, Iwakiri, Jang and Oshiro \cite{IIJO13}
introduced a $G$-family of quandles, 
which is an extension of the above structures.
Recently, Ishii and Nelson \cite{IN17} introduced a $G$-family of biquandles, 
which is a biquandle version of a $G$-family of quandles.

In this paper, 
we extend the result in \cite{Iwakiri15} in three directions.
First, we extend from $\mathbb{Z}_2$, $\mathbb{Z}_3$-flows 
to any $\mathbb{Z}_m$-flow.
Second, we extend from quandles to biquandles.
Finally, we extend from unknotting numbers to Gordian distances.
Thus we can determine the Gordian distance and the unknotting number 
of handlebody-knots more efficiently.
We construct handlebody-knots with Gordian distance $n$ 
and unknotting number $n$ for any $n \in \mathbb{Z}_{>0}$ 
and note that 
one of them can not be obtained by using Alexander quandle colorings 
introduced in \cite{Iwakiri15}.

This paper is organized into seven sections.
In Section 2, 
we recall the definition of a handlebody-knot 
and introduce the Gordian distance and the unknotting number of 
handlebody-knots.
In Section 3, 
we recall the definition of a (bi)quandle and 
a $G$-family of (bi)quandles.
In Section 4,
we introduce a coloring of a diagram of a handlebody-knot 
by using a $G$-family of biquandles.
In Section 5, 
we show that 
there are linear relationships 
for Alexander biquandle colorings of 
a diagram of a handlebody-knot.
In Section 6,
we give lower bounds for the Gordian distance and the unknotting number 
of handlebody-knots 
by using $\mathbb{Z}_m$-family of Alexander biquandles colorings.
In section 7,
we construct handlebody-knots with Gordian distance $n$ 
and unknotting number $n$ for any $n \in \mathbb{Z}_{>0}$.
Moreover, we note that 
one of them can not be obtained by using Alexander quandle colorings 
with $\mathbb{Z}_2, \mathbb{Z}_3$-flows introduced in \cite{Iwakiri15}.

\section{The Gordian distance of handlebody-knots}
A \emph{handlebody-link}, 
which is introduced in \cite{Ishii08}, 
is the disjoint union of handlebodies embedded in the 3-sphere $S^3$.
A \emph{handlebody-knot} is a handlebody-link with one component.
In this paper, 
we assume that every component of a handlebody-link 
is of genus at least $1$.
An \emph{$S^1$-orientation} of a handlebody-link is an orientation 
of all genus 1 components of the handlebody-link, 
where an orientation of a solid torus is an orientation of its core $S^1$.
Two $S^1$-oriented handlebody-links are \emph{equivalent} 
if there exists an orientation-preserving self-homeomorphism of $S^3$ 
sending one to the other 
preserving the $S^1$-orientation.

A \emph{spatial trivalent graph} is 
a graph whose vertices are valency 3 
embedded in $S^3$.
In this paper, 
a trivalent graph may have a circle component, 
which has no vertices.
A \emph{Y-orientation} of a spatial trivalent graph is 
a direction of all edges of the graph 
satisfying that every vertex of the graph is both 
the initial vertex of a directed edge 
and the terminal vertex of a directed edge (Figure \ref{Y-orientation}).
A vertex of a Y-oriented spatial trivalent graph can be allocated a sign; 
the vertex is said to be positive or negative, 
or to have sign $+1$ or $-1$. 
The standard convention is shown in Figure \ref{Y-orientation}.
For a Y-oriented spatial trivalent graph $K$ 
and an $S^1$-oriented handlebody-link $H$, 
we say that 
$K$ \emph{represents} $H$ 
if $H$ is a regular neighborhood of $K$ 
and the $S^1$-orientation of $H$ agrees with the Y-orientation.
Then any $S^1$-oriented handlebody-link can be represented by 
some Y-oriented spatial trivalent graph.
We define a \emph{diagram} of an $S^1$-oriented handlebody-link 
by a diagram of a Y-oriented spatial trivalent graph 
representing the handlebody-link.
An $S^1$-oriented handlebody-link is \emph{trivial}
if it has a diagram with no crossings.
Then the following theorem holds.

\begin{figure}[htb]
\begin{center}
\includegraphics[width=50mm]{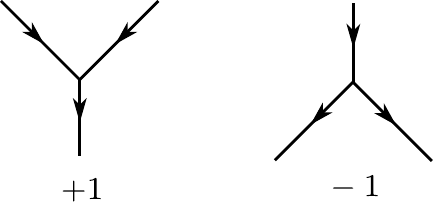}
\end{center}
\caption{Y-orientations and signs.}\label{Y-orientation}
\end{figure}

\begin{theorem}[\cite{Ishii15-2}]\label{Reidemeister moves}
For a diagram $D_i$ of an $S^1$-oriented handlebody-link $H_i$ $(i=1,2)$, 
$H_1$ and $H_2$ are equivalent 
if and only if 
$D_1$ and $D_2$ are related 
by a finite sequence of R1--R6 moves 
depicted in Figure \ref{Reidemeister move} 
preserving Y-orientations.
\end{theorem}

\begin{figure}[htb]
\begin{center}
\includegraphics[width=125mm]{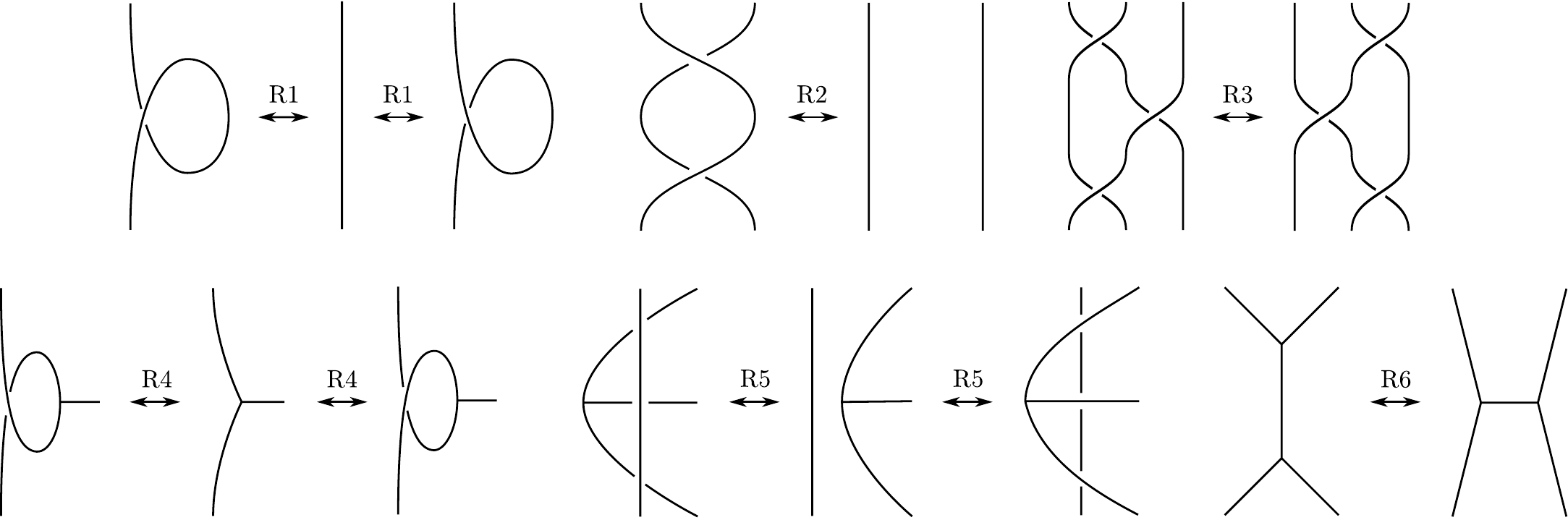}
\caption{The Reidemeister moves for handlebody-links.}\label{Reidemeister move}
\end{center}
\end{figure}

In this paper, 
for a diagram $D$ of an $S^1$-oriented handlebody-link, 
we denote by $\mathcal{A}(D)$ and $\mathcal{SA}(D)$ 
the set of all arcs of $D$ and the one of all semi-arcs of $D$ respectively, 
where a semi-arc is a piece of a curve 
each of whose endpoints is a crossing or a vertex.
An orientation of a (semi-)arc of $D$ 
is also represented by the normal orientation 
obtained by rotating the usual orientation counterclockwise 
by $\pi/2$ on the diagram.
For any $m \in \mathbb{Z}_{\geq 0}$,
we put 
$\mathbb{Z}_m:=\mathbb{Z}/m\mathbb{Z}$.

A \emph{crossing change} of an $S^1$-oriented handlebody-link $H$ is 
that of a spatial trivalent graph representing $H$.
This deformation can be realized by switching two handles 
depicted in Figure \ref{crossing change}.
It is easy to see that 
any two $S^1$-oriented handlebody-knots of the same genus can be related 
by a finite sequence of crossing changes.
For any two $S^1$-oriented handlebody-knots $H_1$ and $H_2$ of the same genus, 
we define their \emph{Gordian distance} $d(H_1,H_2)$ 
by the minimal number of crossing changes 
neened to be deformed each other.
In particular, 
for any $S^1$-oriented handlebody-knot $H$ 
and the $S^1$-oriented trivial handlebody-knot $O$ of the same genus, 
we define $u(H):=d(H,O)$, 
which is called the \emph{unknotting number} of $H$.

\begin{figure}[htb]
\begin{center}
\includegraphics[width=55mm]{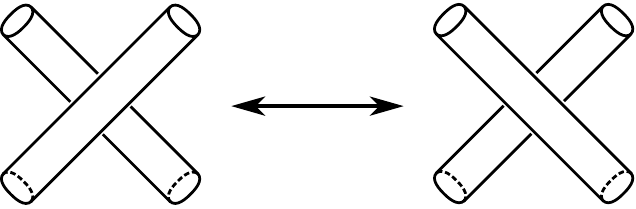}
\caption{A crossing change of an $S^1$-oriented handlebody-link.}\label{crossing change}
\end{center}
\end{figure}

\section{A biquandle and a $G$-family of biquandles}
We recall the definitions of a quandle and a biquandle.

\begin{definition}[\cite{Joyce82, Matveev82}]
A quandle is a non-empty set $X$ with a binary operation $* : X \times X \to X$ 
satisfying the following axioms.
\begin{itemize}
\item
For any $x \in X$, 
$x*x=x$.
\item
For any $x \in X$, 
the map $S_x : X \to X$ defined by $S_x(y)=y*x$ is a bijection.
\item
For any $x,y,z \in X$, 
$(x*y)*z=(x*z)*(y*z)$.
\end{itemize}
\end{definition}

\begin{definition}[\cite{FRS95}]
A biquandle is a non-empty set $X$ 
with binary operations $\os, \us : X \times X \to X$ 
satisfying the following axioms.
\begin{itemize}
\item
For any $x \in X$, 
$x \us x=x \os x$.
\item
For any $x \in X$, 
the map $\underline{S}_x : X \to X$ 
defined by $\underline{S}_x(y)=y \us x$ is a bijection.
\item[]
For any $x \in X$, 
the map $\overline{S}_x : X \to X$ 
defined by $\overline{S}_x(y)=y \os x$ is a bijection.
\item[]
The map $S : X \times X \to X \times X$ 
defined by $S(x,y)=(y \os x,x \us y)$ is a bijection.
\item
For any $x,y,z \in X$, 
\begin{align*}
(x \us y) \us (z \us y)=(x \us z) \us (y \os z),\\
(x \us y) \os (z \us y)=(x \os z) \us (y \os z),\\
(x \os y) \os (z \os y)=(x \os z) \os (y \us z).
\end{align*}
\end{itemize}
\end{definition}

We define $\us^nx:=\underline{S}_x^n$ and $\os^nx:=\overline{S}_x^n$ 
for any $n \in \mathbb{Z}$.
We note that 
$(X,*)$ is a quandle 
if and only if 
$(X,*,\os)$ is a biquandle 
with $x \os y=x$.
For any $m \in \mathbb{Z}_{\geq 0}$, 
a $\mathbb{Z}_m[s^{\pm1},t^{\pm1}]$-module $X$ is a biquandle
with $a \us b=ta+(s-t)b$ and $a \os b=sa$, 
which we call an \emph{Alexander biquandle}.
When $s=1$, 
an Alexander biquandle  
coincides with an Alexander quandle.

\begin{definition}[\cite{IIKKMOpre}]
Let $X$ be a biquandle.
We define two families of binary operations 
$\us^{[n]}, \os^{[n]} : X \times X \to X (n \in \mathbb{Z})$ 
by the equalities 
\begin{align*}
& a \us^{[0]}b=a,~~ a \us^{[1]}b=a \us b,~~ a \us^{[i+j]}b=(a\us^{[i]}b)\us^{[j]}(b\us^{[i]}b),\\
& a \os^{[0]}b=a,~~ a \os^{[1]}b=a \os b,~~ a \os^{[i+j]}b=(a\os^{[i]}b)\os^{[j]}(b\os^{[i]}b)
\end{align*}
for any $i,j \in \mathbb{Z}$.
\end{definition}

Since $a=a \us^{[0]}b=(a\us^{[-1]}b)\us^{[1]}(b\us^{[-1]}b)=(a\us^{[-1]}b)\us(b\us^{[-1]}b)$, 
we have $a \us^{[-1]}b=a\us^{-1}(b\us^{[-1]}b)$ 
and $(b\us^{[-1]}b)\us(b\us^{[-1]}b)=b$.
Then for an Alexander biquandle $X$, 
we have $a\us^{[n]}b=t^na+(s^n-t^n)b$ 
and $a\os^{[n]}b=s^na$ 
for any $a,b \in X$.

We define the \emph{type} of a biquandle $X$ by 
\begin{align*}
\type X=\min \{ n>0 \mid a\us^{[n]}b=a=a\os^{[n]}b ~(\forall a,b \in X) \}.
\end{align*}
Any finite biquandle is of finite type \cite{IN17}.

We also recall the definitions of a $G$-family of quandles 
and a $G$-family of biquandles.

\begin{definition}[\cite{IIJO13}]
Let $G$ be a group with the identity element $e$.
A $G$-family of quandles is a non-empty set $X$ 
with a family of binary operations $*^g:X \times X \to X ~(g \in G)$ 
satisfying the following axioms.
\begin{itemize}
\item
For any $x \in X$ and $g \in G$, 
$x*^gx=x.$
\item
For any $x,y \in X$ and $g,h \in G$, 
$x*^{gh}y=(x*^gy)*^hy$ and $x*^ey=x$.
\item\
For any $x,y,z \in X$ and $g,h \in G$, 
$(x*^gy)*^hz=(x*^hz)*^{h^{-1}gh}(y*^hz)$.
\end{itemize}
\end{definition}

\begin{definition}[\cite{IIKKMOpre,IN17}]
Let $G$ be a group with the identity element $e$.
A $G$-family of biquandles is a non-empty set $X$ 
with two families of binary operations $\us^g, \os^g :X \times X \to X ~(g \in G)$ 
satisfying the following axioms.
\begin{itemize}
\item
For any $x \in X$ and $g \in G$, 
\begin{align*}
x\us^gx=x\os^gx.
\end{align*}
\item
For any $x,y \in X$ and $g,h \in G$, 
\begin{align*}
& x\us^{gh}y=(x\us^gy)\us^h(y\us^gy),~~ x\us^ey=x,\\
& x\os^{gh}y=(x\os^gy)\os^h(y\os^gy),~~ x\os^ey=x.
\end{align*}
\item
For any $x,y,z \in X$ and $g,h \in G$, 
\begin{align*}
(x\us^g y)\us^h(z\os^gy)=(x\us^hz)\us^{h^{-1}gh}(y\us^hz),\\
(x\os^g y)\us^h(z\os^gy)=(x\us^hz)\os^{h^{-1}gh}(y\us^hz),\\
(x\os^g y)\os^h(z\os^gy)=(x\os^hz)\os^{h^{-1}gh}(y\us^hz).
\end{align*}
\end{itemize}
\end{definition}

For a biquandle $(X,\us,\os)$ with $\type X < \infty$, 
$(X,\{ \us^{[n]} \}_{[n] \in \mathbb{Z}_{\type X}},\{ \os^{[n]} \}_{[n] \in \mathbb{Z}_{\type X}})$ 
is a $\mathbb{Z}_{\type X}$-family of biquandles \cite{IN17}.
In particular, 
when $X$ is an Alexander biquandle, 
$(X,\{ \us^{[n]} \}_{[n] \in \mathbb{Z}_{\type X}},\{ \os^{[n]} \}_{[n] \in \mathbb{Z}_{\type X}})$ 
is called a \emph{$\mathbb{Z}_{\type X}$-family of Alexander biquandles}.

\section{Colorings}
In this section, 
we introduce a coloring of a diagram of an $S^1$-oriented handlebody-link 
by a $G$-family of biquandles.
Let $G$ be a group 
and let $D$ be a diagram of an $S^1$-oriented handlebody-link $H$.
A \emph{$G$-flow} of $D$ is a map $\phi : \mathcal{A}(D) \to G$ satisfying

\begin{center}
\includegraphics[width=85mm]{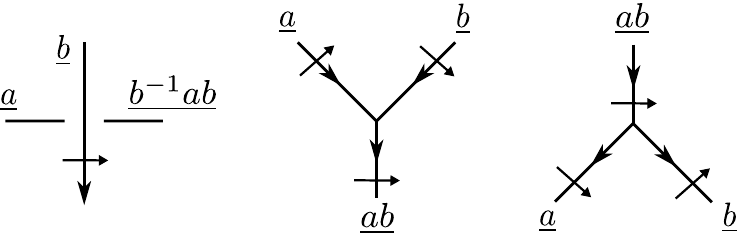}
\end{center}

\noindent
at each crossing and each vertex.
In this paper, 
to avoid confusion, 
we often represent an element of $G$ with an underline.
We denote by $(D,\phi)$, 
which is called a \emph{$G$-flowed diagram} of $H$, 
a diagram $D$ given a $G$-flow $\phi$ 
and put 
$\flow(D;G):=\{ \phi \mid \phi : G \textrm{-flow~of~} D \}$.
We can identify a $G$-flow $\phi$ 
with a homomorphism from the fundamental group $\pi_1(S^3-H)$ to $G$.

Let $G$ be a group 
and let $D$ be a diagram of an $S^1$-oriented handlebody-link $H$.
Let $D'$ be a diagram obtained by applying one of Reidemeister moves 
to the diagram $D$ once.
For any $G$-flow $\phi$ of $D$, 
there is an unique $G$-flow $\phi'$ of $D'$ 
which coincides with $\phi$ 
except near the point where the move applied.
Therefore 
the number of $G$-flow of $D$, 
denoted by $\# \flow(D;G)$, 
is an invariant of $H$.
We call the $G$-flow $\phi'$ 
the \emph{associated $G$-flow} of $\phi$ 
and the $G$-flowed diagram $(D',\phi')$ 
the \emph{associated $G$-flowed diagram} of $(D,\phi)$.

For any $m \in \mathbb{Z}_{\geq 0}$ 
and $\mathbb{Z}_m$-flow $\phi$ of a diagram $D$ of an $S^1$-oriented handlebody-link $H$, 
we define 
$\gcd \phi := \gcd \{ \phi (a), m \mid a \in \mathcal{A}(D) \}$.
Then we have the following lemma 
in the same way as in \cite{IK10}.

\begin{lemma}\label{gcd}
For any $m \in \mathbb{Z}_{\geq 0}$, 
let $(D,\phi)$ be a $\mathbb{Z}_m$-flowed diagram of an $S^1$-oriented handlebody-link H 
and let $(D',\phi')$ be the associated $\mathbb{Z}_m$-flowed diagram of $(D,\phi)$.
Then it follows that 
$\gcd \phi=\gcd \phi'$.
\end{lemma}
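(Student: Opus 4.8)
The plan is to restate the lemma in terms of subgroups of $\mathbb{Z}_m$ and then verify it move by move. First I would note that $\gcd\phi$ is determined by the subgroup $\langle \phi(a)\mid a\in\mathcal{A}(D)\rangle$ of $\mathbb{Z}_m$: viewing the arc-values as integers modulo $m$, this subgroup equals $\langle \gcd\phi\rangle$, and $\gcd\phi$ is recovered from it as its unique generator dividing $m$ (for $m=0$ one works with subgroups of $\mathbb{Z}$, and the generator up to sign). Hence it suffices to prove
\begin{align*}
\langle \phi(a)\mid a\in\mathcal{A}(D)\rangle=\langle \phi'(a')\mid a'\in\mathcal{A}(D')\rangle
\end{align*}
as subgroups of $\mathbb{Z}_m$. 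Conceptually this is transparent: by the identification of $\mathbb{Z}_m$-flows with homomorphisms $\pi_1(S^3-H)\to\mathbb{Z}_m$ recalled in Section~4, the set $\{\phi(a)\}$ is the image of the meridians, which generate $H_1(S^3-H)$, so $\langle \phi(a)\mid a\rangle$ is the image of $\phi$; and the associated flow $\phi'$ represents the same homomorphism, since $D'$ is a diagram of the same $H$ and $\phi'$ agrees with $\phi$ outside the disk of the move. I would nonetheless give the combinatorial argument below, staying at the level of diagrams as in \cite{IK10}.

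The key local observation is that, for $G=\mathbb{Z}_m$, every defining relation of a $G$-flow expresses one arc-value as an integral combination with coefficients in $\{-1,1\}$ of the others: at a crossing the conjugation-type relation of Section~4 degenerates (as $\mathbb{Z}_m$ is abelian) to the assertion that the two under-arcs carry the same value, with no condition on the over-arc; at a trivalent vertex the Y-orientation condition says that the value of one incident arc is $\pm$ the sum of the values of the other two. Consequently, if $B$ is a disk and $a_1,\dots,a_k$ are the arcs of a flowed diagram meeting $\partial B$, then every arc contained in $B$ carries a value in $\langle \phi(a_1),\dots,\phi(a_k)\rangle$: starting from the boundary arcs and applying the linear relations at the finitely many crossings and vertices inside $B$ propagates membership in this subgroup across the whole local tangle, because that tangle is connected to $\partial B$ and contains no closed sub-configuration.

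Now fix a move among R1--R6 and let $B$ be the disk where it is performed, so that $D$ and $D'$ coincide outside $B$ and $\phi,\phi'$ agree on the arcs having a point outside $B$; let $S\leq\mathbb{Z}_m$ be the subgroup generated by the $\phi$-values (equivalently the $\phi'$-values) of these arcs. Then $S$ is contained in each side of the displayed equality. For the reverse inclusions: an arc of $D$ disjoint from $B$ contributes its value to $S$ by definition, and an arc of $D$ meeting $B$ has its value in $S$ by the local observation applied to $B$; hence $\langle \phi(a)\mid a\rangle\subseteq S$, and symmetrically $\langle \phi'(a')\mid a'\rangle\subseteq S$. Therefore all three subgroups coincide, and $\gcd\phi=\gcd\phi'$.

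I expect the only delicate part to be the bookkeeping for the moves in Figure~\ref{Reidemeister move} that involve a trivalent vertex, where one must track which arcs are created, destroyed, merged, or split and check that the vertex relations together with the crossing relations still write every new arc-value as a $\{-1,1\}$-combination of the values on $\partial B$. Since no flow relation ever produces a coefficient other than $\pm1$, a Reidemeister move can neither enlarge nor shrink the subgroup generated by the arc-values, so the substantive content is merely verifying, move by move, that the local tangle is connected to $\partial B$, which is evident from each picture.
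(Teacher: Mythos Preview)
Your argument is correct. The paper does not give its own proof of this lemma but simply refers to \cite{IK10}; your move-by-move combinatorial check, based on the observation that for $G=\mathbb{Z}_m$ every flow relation has only $\pm1$ coefficients, is exactly the kind of verification carried out there. Your additional conceptual remark---that the subgroup $\langle\phi(a)\mid a\in\mathcal{A}(D)\rangle$ is nothing but the image of the homomorphism $\pi_1(S^3-H)\to\mathbb{Z}_m$ corresponding to $\phi$, hence automatically invariant---is a clean shortcut that bypasses the case analysis entirely, using the identification stated in Section~4.
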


Let $G$ be a group, $X$ be a $G$-family of biquandles 
and let $(D,\phi)$ be a $G$-flowed diagram of an $S^1$-oriented handlebody-link $H$.
An \emph{$X$-coloring} of $(D,\phi)$ is a map 
$C : \mathcal{SA}(D,\phi) \to X$ satisfying

\begin{center}
\includegraphics[width=130mm]{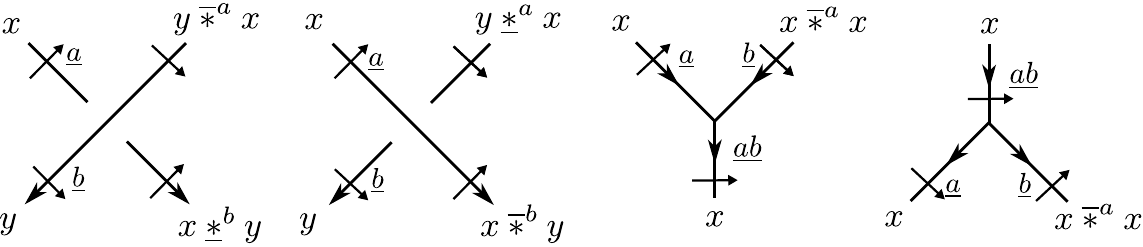}
\end{center}

\noindent
at each crossing and each vertex, 
where $\mathcal{SA}(D,\phi)$ is the set of all semi-arcs of $(D,\phi)$.
We denote by $\col_X(D,\phi)$ 
the set of all $X$-colorings of $(D,\phi)$.
We note that 
$\col_X(D,\phi)$ is a vector space over $X$ 
when $X$ is a field.

\begin{proposition}[\cite{IN17}]\label{IN17}
Let $X$ be a $G$-family of biquandles  
and let $(D,\phi)$ be a $G$-flowed diagram of an $S^1$-oriented handlebody-link H.
Let $(D',\phi')$ be the associated $G$-flowed diagram of $(D,\phi)$.
For any $X$-coloring $C$ of $(D,\phi)$, 
there is an unique $X$-coloring $C'$ of $(D',\phi')$ 
which coincides with $C$ 
except near the point where the move applied. 
\end{proposition}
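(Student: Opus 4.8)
The plan is to verify the assertion one Reidemeister move at a time over the moves R1--R6 of Theorem~\ref{Reidemeister moves}. Every such move is supported inside a disk $B$ in the plane of the diagram, and the semi-arcs of $D$ and $D'$ lying outside $B$ are in canonical bijection; define $C'$ to agree with $C$ on all of them. Since the associated $G$-flow $\phi'$ already coincides with $\phi$ outside $B$, every crossing and vertex of $D'$ outside $B$ involves only these unchanged semi-arcs together with unchanged flow labels, so the coloring condition there holds automatically for $C'$. The whole problem is thus local: for each move one must (i) extend $C$ to the finitely many semi-arcs of $D'$ contained in $B$ so that the coloring conditions at the crossings and vertices inside $B$ hold, (ii) show this extension is unique, and (iii) check that the colors $C'$ puts on the semi-arcs meeting $\partial B$ agree with the ones forced by $C$ on $D$.

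For (i) and (ii) I would propagate colors using the bijectivity axioms. Reading the flow labels off $\phi'$, the crossing rule in the definition expresses the pair of outgoing colors as the image of the pair of incoming colors under a $G$-indexed analogue of $S$, and it also expresses each individual outgoing under- or over-color as the image of an incoming one under a map of the form $\underline{S}_x$ or $\overline{S}_x$ decorated with a flow label; because $x\us^e y=x=x\os^e y$ and $x\us^g x=x\os^g x$, all of these $G$-indexed maps are again bijections. The vertex rule likewise determines the colors of some germs of edges at a trivalent vertex from the others bijectively. Consequently, for the moves R2--R6, where the diagram inside $B$ carries no closed loop, walking along the strands pins down the internal colors uniquely and consistently; for the kink move R1 (and its variant), where $B$ contains a small loop, the internal color is again forced, and going once around the loop produces a compatibility condition which we treat in step (iii).

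Step (iii) is where the axioms of a $G$-family of biquandles are used, one per move. For R2 one invokes that $\underline{S}$, $\overline{S}$ and $S$ are bijections, so the two new crossings cancel on $\partial B$. For R1 one uses $x\us^g x=x\os^g x$ together with $x\us^e y=x$, $x\os^e y=x$ and the multiplicativity relations $x\us^{gh}y=(x\us^g y)\us^h(y\us^g y)$ and $x\os^{gh}y=(x\os^g y)\os^h(y\os^g y)$, so that a kink contributes nothing. For the triple-point move and for the moves sliding a strand across a trivalent vertex, the equality of the boundary colors of $D$ and $D'$ is exactly one of the three exchange relations
\begin{align*}
(x\us^g y)\us^h(z\os^g y)&=(x\us^h z)\us^{h^{-1}gh}(y\us^h z),\\
(x\os^g y)\us^h(z\os^g y)&=(x\us^h z)\os^{h^{-1}gh}(y\us^h z),\\
(x\os^g y)\os^h(z\os^g y)&=(x\os^h z)\os^{h^{-1}gh}(y\us^h z),
\end{align*}
with $g,h$ the flow labels read from $\phi'$ and the conjugated exponent $h^{-1}gh$ appearing because the flow of the moved strand changes when it is pushed across; when the local flows are composite one first splits them using the multiplicativity relations. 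One checks each of the finitely many pictures in Figure~\ref{Reidemeister move}, in each normal orientation, against the matching axiom.

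I expect the main obstacle to be the bookkeeping in step (iii): correctly reading the over/under data, the normal orientations, and above all the flow labels off each variant of the pictures, so that the move translates into the axiom exactly as stated rather than into a rearranged or inverted form; keeping track of the conjugated exponent $h^{-1}gh$ is the delicate point. The vertex-sliding moves, where the $G$-flow condition at the vertex must be used in tandem with the coloring condition, are the most involved, but once the dictionary between "a strand passing a vertex" and the $G$-family axioms is fixed, each case reduces to a short computation.
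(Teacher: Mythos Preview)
The paper does not give its own proof of this proposition; it is quoted from \cite{IN17} and used as a black box. Your outline is the standard route to such a result and is essentially what one expects the argument in \cite{IN17} to look like: a local, move-by-move verification that the $G$-family biquandle axioms are exactly the compatibility conditions needed for colorings to survive the moves R1--R6, with bijectivity of the $G$-indexed $\underline{S}$, $\overline{S}$, $S$ supplying existence and uniqueness of the extension inside the disk. There is nothing in the present paper to compare your approach against.
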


We call the $X$-coloring $C'$ 
the \emph{associated $X$-coloring} of $C$.
By this proposition, 
we have $\# \col_X(D,\phi) = \# \col_X(D',\phi')$.

\begin{proposition}\label{trivial col.}
Let $G$ be a group 
and let X be a $G$-family of biquandles.
Then the following hold.

\begin{enumerate}
\item
Let $(D,\phi)$ be a $G$-flowed diagram of an $S^1$-oriented handlebody-link.
Then it follows that $\#\col_X(D,\phi) \geq \#X$.
\item
Let $(O,\psi)$ be a $G$-flowed diagram of an $S^1$-oriented 
$m$-component trivial handlebody-link.
Then it follows that $\#\col_X(O,\psi) = (\#X)^m$.
\end{enumerate}
\end{proposition}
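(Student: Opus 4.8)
The plan is to analyze the coloring condition as a linear-algebraic system and exhibit enough colorings directly. First I would establish (1). Given a $G$-flowed diagram $(D,\phi)$ of an $S^1$-oriented handlebody-link, I claim that for each $x \in X$ the constant map $C_x : \mathcal{SA}(D,\phi) \to X$ sending every semi-arc to $x$ is an $X$-coloring. Indeed, the coloring rule at a crossing reads off as $x \mapsto x \us^g x$ and $x \mapsto x \os^g x$ (after one substitutes the incoming label $x$ and the $G$-flow value $g$ on the over-strand), and the first axiom of a $G$-family of biquandles gives $x \us^g x = x \os^g x$; for this to produce the same outgoing label on both strands one also needs $x \us^g x = x$, which follows because $x \us^g x = x \os^g x$ together with the requirement that $\overline{S}^g_x$ and $\underline{S}^g_x$ be compatible forces the diagonal to be fixed — more carefully, in a $G$-family the map $x \mapsto x \us^g x$ is idempotent-like and equals the identity on the diagonal; I would verify this cleanly from the axioms rather than by hand-waving. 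At a vertex the rule is governed by the $G$-flow condition $\phi$ already satisfied, so a constant coloring trivially extends across vertices. Hence the map $X \to \col_X(D,\phi)$, $x \mapsto C_x$, is well-defined and clearly injective, giving $\#\col_X(D,\phi) \geq \#X$.

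For (2), let $(O,\psi)$ be a $G$-flowed diagram of an $m$-component trivial handlebody-link. By definition of triviality and by Theorem \ref{Reidemeister moves}, $O$ is related by R1--R6 moves to a crossingless diagram $O_0$, and by Proposition \ref{IN17} the number $\#\col_X(D,\phi)$ is invariant under these moves (replacing $\psi$ by its associated $G$-flow at each step), so it suffices to compute $\#\col_X(O_0,\psi_0)$ for a crossingless diagram $O_0$ of the $m$-component trivial handlebody-link with its induced $G$-flow $\psi_0$. A crossingless diagram has no crossing relations, so an $X$-coloring is constrained only by the vertex relations. Each component of $O_0$ is a planar $\theta$-like trivalent graph (or a circle); I would argue that the vertex relations on a single component, which are crossingless, force all semi-arcs of that component to carry a common value determined by the value on any one semi-arc, and that conversely any choice of that value gives a valid coloring — this is where one uses that the vertex coloring rule is a bijective reparametrization and that the component is simply enough connected that the relations close up consistently. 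Since the $m$ components are disjoint, the colorings factor as an $m$-fold product, giving $\#\col_X(O_0,\psi_0) = (\#X)^m$.

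The main obstacle I anticipate is handling the vertex coloring rule precisely: unlike a crossing, a trivalent vertex in a $G$-flowed diagram imposes a relation among three semi-arc colors that depends on the $G$-flow values and on which of the edges is the ``merging'' edge, and I need to check that on a crossingless component this relation has exactly a one-parameter family of solutions (parametrized by $\#X$) with no further collapse or obstruction. Concretely, the $G$-flow condition at a vertex says the incoming flow values combine (sum, in the $\mathbb{Z}_m$ case) to the outgoing one, and the induced biquandle coloring rule at the vertex must then be shown to be equivalent, after propagating around the component, to ``all semi-arc labels equal.'' I would extract this from the figure defining the vertex coloring rule together with the $G$-family axioms, reducing it to: the assignment is forced once one semi-arc is fixed, because each vertex rule expresses one adjacent label as a bijective function of another. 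A secondary, minor point is making the invariance argument airtight when a Reidemeister move changes the set of arcs and hence the $G$-flow; but Lemma \ref{gcd} and Proposition \ref{IN17} already package exactly what is needed, so this is routine. Once the vertex analysis is in place, both (1) and (2) follow as described.
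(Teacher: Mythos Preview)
Your argument for (1) has a genuine gap: the constant map $C_x$ is \emph{not} an $X$-coloring of a $G$-flowed diagram in general. The first biquandle axiom says only $x\us^g x = x\os^g x$; it does \emph{not} say this common value equals $x$. For instance, in the Alexander biquandle $a\us b = ta+(s-t)b$, $a\os b = sa$, one has $x\us x = x\os x = sx$, which differs from $x$ whenever $s\neq 1$. So at a crossing with incoming labels $x,x$, the outgoing under-arc label is $x\us^g x = s^g x$, not $x$, and the constant assignment fails. Your parenthetical (``the diagonal is fixed'') is exactly the quandle axiom $x*x=x$, which is precisely what distinguishes quandles from biquandles; you cannot extract it from the $G$-family axioms. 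The same issue bites at vertices: the vertex coloring rule involves an $\os^g$-operation on the label, so even on a crossingless component the labels are not all equal, and your sketch for (2) inherits the problem.

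The paper's proof avoids this by first deforming $(D,\phi)$ via Reidemeister moves (using an external result on braid presentations of handlebody-links) to a specific standard form, and then writing down an explicit one-parameter family of colorings that is \emph{not} constant: starting from $x$ on a chosen semi-arc, the other labels are expressions like $x\us^{g}x$, $(x\us^{g}x)\us^{h}(x\us^{g}x)$, etc., all determined by $x$ but varying across the diagram. The point is that these ``diagonal'' values propagate consistently through both crossings and vertices in the standard form. For (2) the paper similarly reduces to an explicit crossingless diagram whose particular flow labels (some equal to the identity $e$) make the vertex relations collapse. Your high-level strategy for (2) is close to this, but the execution requires tracking the non-constant labels rather than asserting they are all equal.
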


\begin{proof} 
\begin{enumerate}
\item
By Theorem \ref{Reidemeister moves} and \cite{Murao16}, 
we can deform $(D,\phi)$ into the $G$-flowed diagram $(D',\phi')$
depicted in Figure \ref{bind pres.} 
by a finite sequence of Reidemeinter moves preserving Y-orientations, 
where $b$ is a classical $l$-braid, 
and $a_{i,1}, \ldots, a_{i,m_i},b_{i,1}, \ldots, b_{i,n_i} \in G$ for any $i=1,\ldots,s$.
We note that 
$\prod_{j=1}^{m_i}a_{i,j}=\prod_{j=1}^{n_i}b_{i,j}$ for any $i=1,\ldots,s$, 
and $x \us^g x=x \os^g x$ for any $x \in X$ and $g \in G$.
By Proposition \ref{IN17}, 
it is sufficient to prove that 
$\#\col_X(D',\phi') \geq \#X$.
Here for any $x \in X$ and $g \in G$, 
we write $x*^gx$ for $x \us^g x$ and $x \os^g x$ simply.
Then for any $x \in X$, 
the assignment of elements of $X$ to each semi-arc of $(D',\phi')$ 
as shown in Figures \ref{bind pres.}  and \ref{braid col.} 
is an $X$-coloring, 
where each $g_i$ represents an element of $G$ 
in Figure \ref{braid col.}.
Therefore we have $\#\col_X(D',\phi') \geq \#X$.

\begin{figure}[htb]
\begin{center}
\includegraphics[width=134mm]{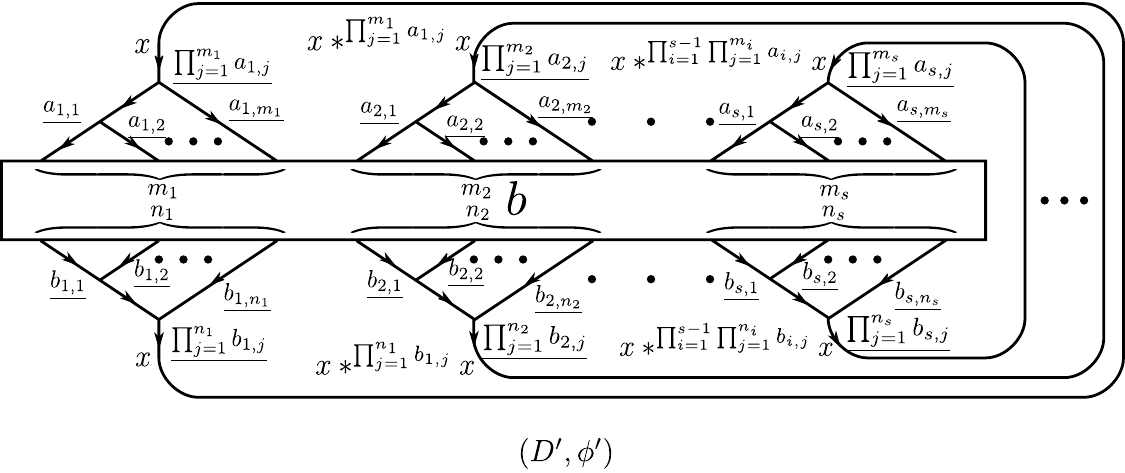}
\end{center}
\caption{A $G$-flowed diagram $(D',\phi')$ and its $X$-coloring.}\label{bind pres.} 
\end{figure}

\begin{figure}[htb]
\begin{center}
\includegraphics[width=134mm]{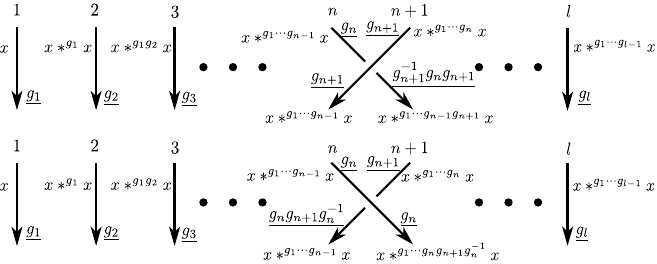}
\end{center}
\caption{An $X$-coloring of $(D',\phi')$ in the part of $b$.}\label{braid col.} 
\end{figure}

\item
It is sufficient to prove that 
$\#\col_X(O,\psi) = \#X$ 
when $m=1$.
Let $(O_g,\psi_g)$ be a $G$-flowed diagram of 
an $S^1$-oriented trivial handlebody-knot of genus $g$.
By Theorem \ref{Reidemeister moves}, 
we can deform $(O_g,\psi_g)$ into the $G$-flowed diagram $(O'_g,\psi'_g)$
depicted in Figure \ref{trivial hdbdy-knot col.} 
by a finite sequence of Reidemeinter moves preserving Y-orientations, 
where $a_i \in G$ for any $i=1,\ldots,g$, and $e$ is the identity of $G$.
By Proposition \ref{IN17}, 
it is sufficient to prove that 
$\#\col_X(O'_g,\psi'_g) = \#X$.
For any $x \in X$, 
the assignment of $x$ to each semi-arc of $(O'_g,\psi'_g)$ 
as shown in Figure \ref{trivial hdbdy-knot col.} 
is an $X$-coloring.
On the other hand, 
since any $X$-coloring of $(O_g',\psi'_g)$ is given 
by Figure \ref{trivial hdbdy-knot col.} 
for some $x \in X$, 
we have $\#\col_X(O_g',\psi'_g)=\#X$.
\end{enumerate}
\end{proof}

\begin{figure}[htb]
\begin{center}
\includegraphics[width=80mm]{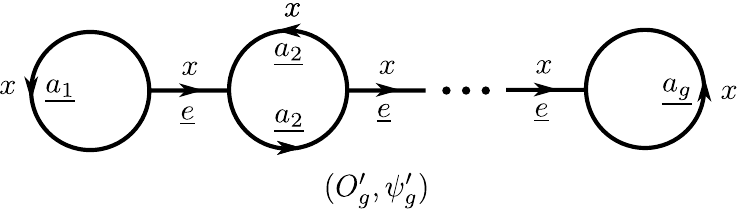}
\end{center}
\caption{A $G$-flowed diagram $(O'_g,\psi'_g)$ and its $X$-coloring.}\label{trivial hdbdy-knot col.} 
\end{figure}

\section{Linear relationships for Alexander biquandle colorings}

For any $\mathbb{Z}_m$-flowed diagram $(D,\phi)$ of an $S^1$-oriented handlebody-link, 
we define the \emph{Alexander numbering} of $(D,\phi)$ 
by assigning elements of $\mathbb{Z}_m$ 
to each region of $(D,\phi)$ 
as shown in Figure \ref{Alex. numbering}, 
where the unbounded region is labeled $0$.
It is an extension of the Alexander numbering of a classical knot diagram \cite{Alexander23}.
It is easy to see that 
for any $\mathbb{Z}_m$-flowed diagram $(D,\phi)$ of an $S^1$-oriented handlebody-link, 
there uniquely exists the Alexander numbering of $(D,\phi)$.
For example, 
a $\mathbb{Z}_m$-flowed diagram of the handlebody-knot $5_2$ \cite{IKMS12} 
with the Alexander numbering 
is depicted in Figure \ref{ex. Alex. numbering}.
For any semi-arc $\alpha$ of $(D,\phi)$, 
we denote by $\rho (\alpha)$ 
the Alexander number of the region 
which the normal orientation of $\alpha$ points to.

\begin{figure}[htb]
\begin{center}
\includegraphics[width=25mm]{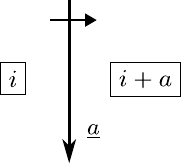}
\end{center}
\caption{The Alexander numbering of $(D,\phi)$.}\label{Alex. numbering} 
\end{figure}

\begin{figure}[htb]
\begin{center}
\includegraphics[width=50mm]{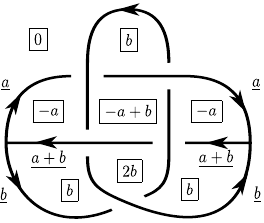}
\end{center}
\caption{A $\mathbb{Z}_m$-flowed diagram of $5_2$ with the Alexander numbering.}\label{ex. Alex. numbering} 
\end{figure}

In the following, 
every component of a diagram of any $S^1$-oriented handlebody-link 
may have a crossing at least $1$.
Let $(D,\phi)$ be a $\mathbb{Z}_m$-flowed diagram of an $S^1$-oriented handlebody-link 
with the Alexander numbering 
and let $X$ be a $\mathbb{Z}_m$-family of Alexander biquandles.
We put $C(D,\phi)=\{ c_1,\ldots,c_n \}$ 
and $V(D,\phi)=\{ \tau_1,\ldots ,\tau_{2k} \}$, 
where $C(D,\phi)$ and $V(D,\phi)$ are 
the set of all crossings of $(D,\phi)$ and
the one of all  vertices of $(D,\phi)$ respectively, 
where the sign of $\tau_i$ is $1$ for any $i=1,\ldots,k$ 
and $-1$ for any $i=k+1,\ldots,2k$.
Then we denote by $x_i$ 
each semi-arc of $(D,\phi)$ as shown in Figure \ref{semi-arcs}, 
which implies 
$\mathcal{SA}(D,\phi)=\{x_1, \ldots ,x_{2n+3k}\}$.

\begin{figure}[htb]
\begin{center}
\includegraphics[width=120mm]{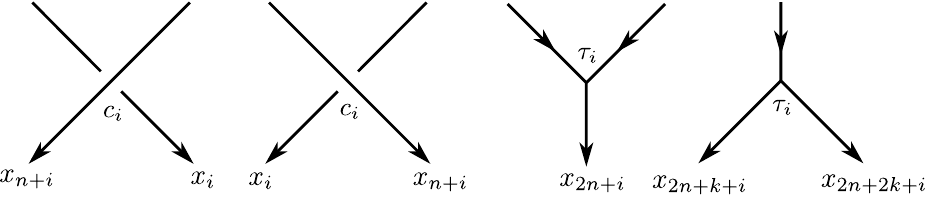}
\end{center}
\caption{Semi-arcs $x_i$ of $(D,\phi).$}\label{semi-arcs} 
\end{figure}

We denote by $u_i$, $v_i$, $v'_i$, $w_i$, $\alpha_i$, $\beta_i$ and $\gamma_i$ 
the semi-arcs incident to a crossing $c_i$ or a vertex $\tau_i$ 
as shown in Figure \ref{notation}.
We put $\phi_i:=\phi(u_i)=\phi(w_i)$, 
$\psi_i:=\phi(v_i)=\phi(v'_i)$, 
$\eta_i:=\phi(\alpha_i)$ and 
$\theta_i:=\phi(\beta_i)$.
We denote by $\epsilon_{c_i} \in \{\pm1 \}$ and $\epsilon_{\tau_i} \in \{\pm1 \}$ 
the signs of a crossing $c_i$ 
and a vertex $\tau_i$ respectively (see Figure \ref{notation}).

\begin{figure}[htb]
\begin{center}
\includegraphics[width=120mm]{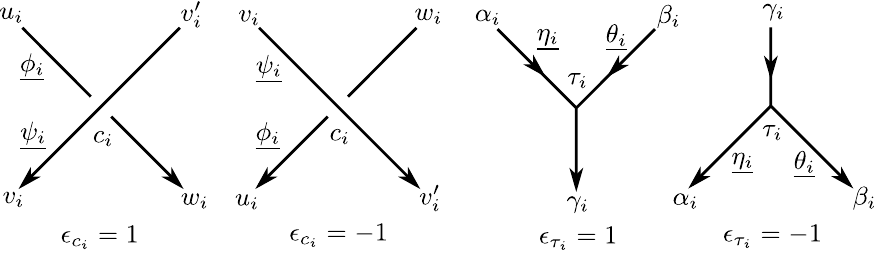}
\end{center}
\caption{Notations.}\label{notation} 
\end{figure}

For any semi-arcs $y,y' \in \mathcal{SA}(D,\phi)$, 
we put 
\begin{align*}
\delta (y,y'):=
\begin{cases}
1 & (y=y'),\\
0 & (y \neq y').
\end{cases}
\end{align*}
Then we define a matrix 
$A(D,\phi;X)= (a_{i,j}) \in M(2n+4k,2n+3k;X)$ by 
\begin{align*}
a_{i,j}=
\begin{cases}
\delta(u_i,x_j)t^{\psi_i}+\delta(v_i,x_j)(s^{\psi_i}-t^{\psi_i})-\delta(w_i,x_j) & (1 \leq i \leq n),\\
-\delta(v_{i-n},x_j)s^{\phi_{i-n}}+\delta(v'_{i-n},x_j) & (n+1 \leq i \leq 2n),\\
\delta(\alpha_{i-2n},x_j)-\delta(\gamma_{i-2n},x_j) & (2n+1 \leq i \leq 2n+2k),\\
\delta(\beta_{i-2n-2k},x_j)-\delta(\gamma_{i-2n-2k},x_j)s^{\eta_{i-2n-2k}} & (2n+2k+1 \leq i \leq 2n+4k).
\end{cases}
\end{align*}
We note that 
$A(D,\phi;X)$ is ditermined 
up to permuting of rows and columns of the matrix, 
and it follows that 
\begin{align*}
\col_X(D,\phi)=
\left\{ 
\begin{pmatrix}
z_1\\
z_2\\
\vdots\\
z_{2n+3k}
\end{pmatrix}
\in X^{2n+3k}
\middle |
A(D,\phi;X)
\begin{pmatrix}
z_1\\
z_2\\
\vdots\\
z_{2n+3k}
\end{pmatrix}
=\bm{0}
\right\}.
\end{align*}

For example, 
let $(E,\psi)$ be the $\mathbb{Z}_m$-flowed diagram 
of the handlebody-knot depicted in Figure \ref{ex. mtx.}.
Then we have 
\begin{align*}
A(E,\psi;X)=
\begin{pmatrix}
-1 & 0 & s^a-t^a & t^a & 0 & 0 & 0 \\
0 & -1 & 0 & s^b-t^b & 0 & t^b & 0 \\
0 & 1 & -s^b & 0 & 0 & 0 & 0 \\
0 & 0 & 0 & -s^a & 0 & 0 & 1 \\
0 & 0 & 1 & 0 & -1 & 0 & 0 \\
0 & 0 & 0 & 0 & -1 & 1 & 0 \\
1 & 0 & 0 & 0 & -s^a & 0 & 0 \\
0 & 0 & 0 & 0 & -s^a & 0 & 1 \\
\end{pmatrix}.
\end{align*}

\begin{figure}[htb]
\begin{center}
\includegraphics[width=50mm]{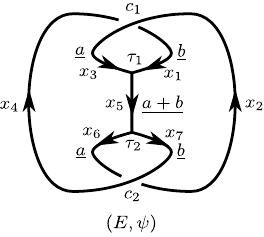}
\end{center}
\caption{A $\mathbb{Z}_m$-flowed diagram $(E,\psi)$.}\label{ex. mtx.} 
\end{figure}

Then we have the following proposition.

\begin{proposition}\label{linearity}
Let $(D,\phi)$ be a $\mathbb{Z}_m$-flowed diagram of an $S^1$-oriented handlebody-link 
with the Alexander numbering 
and let $X$ be a $\mathbb{Z}_m$-family of Alexander biquandles.
Let $\bm{a}_i$ be the $i$-th row of $A(D,\phi;X)$, 
that is,
\begin{align*}
A(D,\phi;X)=(a_{i,j})=
\begin{pmatrix}
\bm{a}_1\\
\bm{a}_2\\
\vdots \\
\bm{a}_{2n+4k}
\end{pmatrix}.
\end{align*}
Then it follows that 
\begin{align*}
& \sum_{i=1}^{n}\epsilon_{c_i}t^{-\rho(w_i)}(s^{\phi_i}-t^{\phi_i})\bm{a}_i
+\sum_{i=1}^{n}\epsilon_{c_i}t^{-\rho(v'_i)}(s^{\psi_i}-t^{\psi_i})\bm{a}_{n+i}\\
&+\sum_{i=1}^{2k}\epsilon_{\tau_i}t^{-\rho(\alpha_i)}(s^{\eta_i}-t^{\eta_i})\bm{a}_{2n+i}
+\sum_{i=1}^{2k}\epsilon_{\tau_i}t^{-\rho(\beta_i)}(s^{\theta_i}-t^{\theta_i})\bm{a}_{2n+2k+i}=\bm{0}.
\end{align*}
\end{proposition}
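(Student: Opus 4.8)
The plan is to verify the claimed linear relation column by column: since $A(D,\phi;X)$ has $2n+3k$ columns indexed by the semi-arcs $x_1,\dots,x_{2n+3k}$, it suffices to fix an arbitrary semi-arc $x=x_j$ and check that the coefficient of $x$ in the stated linear combination of rows vanishes. Each semi-arc $x$ of $(D,\phi)$ contributes to only a bounded number of rows: it appears as an incoming/outgoing strand at the (at most two) crossings and (at most two) vertices incident to it. So I would enumerate the local pictures — a semi-arc running between two crossings, between a crossing and a vertex, or between two vertices — using the labelling conventions of Figures \ref{semi-arcs} and \ref{notation}, and in each case write down exactly which of the $\delta(\cdot,x)$ symbols in the definition of $a_{i,j}$ are equal to $1$.

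The key computational input is the interaction between the $\mathbb{Z}_m$-flow values $\phi_i,\psi_i,\eta_i,\theta_i$, the Alexander numbers $\rho(\cdot)$ of the adjacent regions, and the entries $s^{(\cdot)},t^{(\cdot)}$ appearing in $A$. Concretely, at a crossing $c_i$ the four semi-arcs $u_i,v_i,v_i',w_i$ border regions whose Alexander numbers differ by the flow values according to the rule in Figure \ref{Alex. numbering}; likewise at a vertex $\tau_i$ the Alexander numbers of the regions around $\alpha_i,\beta_i,\gamma_i$ are constrained by $\eta_i,\theta_i$ and the relation $\eta_i+\theta_i=\phi(\gamma_i)$ (up to sign, from the $Y$-orientation). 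Using these identities one rewrites, for the fixed column $x$, the total coefficient
\begin{align*}
\epsilon_{c_i}t^{-\rho(w_i)}(s^{\phi_i}-t^{\phi_i})a_{i,j}
&+\epsilon_{c_i}t^{-\rho(v_i')}(s^{\psi_i}-t^{\psi_i})a_{n+i,j}\\
&+\epsilon_{\tau_i}t^{-\rho(\alpha_i)}(s^{\eta_i}-t^{\eta_i})a_{2n+i,j}
+\epsilon_{\tau_i}t^{-\rho(\beta_i)}(s^{\theta_i}-t^{\theta_i})a_{2n+2k+i,j}
\end{align*}
as a sum of monomials $\pm t^{-\rho}s^{p}t^{q}$ indexed by the incidences of $x$, and one checks that these cancel in pairs: the contribution of $x$ "leaving" a crossing or vertex on one side is the negative of its contribution "entering" the adjacent crossing or vertex on the other side, precisely because the normalising factor $t^{-\rho}$ is taken at the region the semi-arc points into and the $s,t$ powers track the change of Alexander number and flow across $x$. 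Summing over all crossings and vertices then telescopes to $\bm 0$ in every column.

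The main obstacle I expect is bookkeeping rather than conceptual: getting the signs $\epsilon_{c_i},\epsilon_{\tau_i}$, the choice of region for $\rho$, and the orientation conventions of Figures \ref{Y-orientation}--\ref{notation} consistent across all local cases, and in particular handling the vertex rows (the last $2k$ of the $4k$ vertex equations, which carry the factor $s^{\eta_{i-2n-2k}}$) so that the positive and negative vertices are treated uniformly. The cleanest way to organise this is to introduce, for each semi-arc $x$ and each end of $x$, a single "flux" monomial and show the relation is equivalent to the statement that the net flux through $x$ is zero; once the conventions are pinned down, each of the finitely many local verifications is a one-line check. I would present the crossing case in full detail and indicate that the four vertex cases (positive/negative vertex, each with the semi-arc in the $\alpha$-, $\beta$-, or $\gamma$-position) are entirely analogous.
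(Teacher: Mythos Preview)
Your proposal is correct and follows essentially the same approach as the paper: fix a column $x_j$, expand each weighted row entry, and show the resulting expression is a telescoping sum over the two endpoints of $x_j$. The paper streamlines your case-by-case plan by introducing a sign $\epsilon(y;\sigma)=\pm1$ for each incidence of a semi-arc $y$ at a crossing or vertex $\sigma$ and rewriting every contribution uniformly as $\delta(y,x_j)\epsilon(y;\sigma)t^{-\rho(y)}(s^{\phi(y)}-t^{\phi(y)})$, so that the cancellation becomes a single line rather than a separate verification for each local picture.
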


\begin{proof}
For any semi-arc $y$ incident to a crossing or a vertex $\sigma$, 
we put
\begin{align*}
\epsilon (y;\sigma):=
\begin{cases}
1 & \textrm{if the orientation of $y$ points to $\sigma$},\\
-1 & \textrm{otherwise}.
\end{cases}
\end{align*}

It is sufficient to prove that for any $j=1,2,\ldots,2n+3k$, 
\begin{align*}
& \sum_{i=1}^{n}\epsilon_{c_i}t^{-\rho(w_i)}(s^{\phi_i}-t^{\phi_i})a_{i,j}
+\sum_{i=1}^{n}\epsilon_{c_i}t^{-\rho(v'_i)}(s^{\psi_i}-t^{\psi_i})a_{n+i,j}\\
&+\sum_{i=1}^{2k}\epsilon_{\tau_i}t^{-\rho(\alpha_i)}(s^{\eta_i}-t^{\eta_i})a_{2n+i,j}
+\sum_{i=1}^{2k}\epsilon_{\tau_i}t^{-\rho(\beta_i)}(s^{\theta_i}-t^{\theta_i})a_{2n+2k+i,j}=0.
\end{align*}
For the first term, we have 
\begin{align*}
& \epsilon_{c_i}t^{-\rho(w_i)}(s^{\phi_i}-t^{\phi_i})\delta(u_i,x_j)t^{\psi_i}
= \delta(u_i,x_j) \epsilon(u_i;c_i)t^{-\rho(u_i)}(s^{\phi(u_i)}-t^{\phi(u_i)}), \\
& \epsilon_{c_i}t^{-\rho(w_i)}(s^{\phi_i}-t^{\phi_i})\delta(v_i,x_j)(s^{\psi_i}-t^{\psi_i}) \tag{1}\\
&= \epsilon_{c_i}t^{-\rho(w_i)}s^{\phi_i}\delta(v_i,x_j)(s^{\psi_i}-t^{\psi_i})
-\epsilon_{c_i}t^{-\rho(w_i)}t^{\phi_i}\delta(v_i,x_j)(s^{\psi_i}-t^{\psi_i})\\
&= \epsilon_{c_i}t^{-\rho(w_i)}\delta(v_i,x_j)(s^{\psi_i}-t^{\psi_i})s^{\phi_i}
+\delta(v_i,x_j)\epsilon(v_i;c_i)t^{-\rho(v_i)}(s^{\phi(v_i)}-t^{\phi(v_i)}),\\
& \epsilon_{c_i}t^{-\rho(w_i)}(s^{\phi_i}-t^{\phi_i})(-\delta(w_i,x_j))
= \delta(w_i,x_j) \epsilon(w_i;c_i)t^{-\rho(w_i)}(s^{\phi(w_i)}-t^{\phi(w_i)}). 
\end{align*}
For the second term, we have 
\begin{align*}
& \epsilon_{c_i}t^{-\rho(v'_i)}(s^{\psi_i}-t^{\psi_i})(-\delta(v_i,x_j)s^{\phi_i})
= -\epsilon_{c_i}t^{-\rho(v'_i)}\delta(v_i,x_j)(s^{\psi_i}-t^{\psi_i})s^{\phi_i}, \tag{2} \\
& \epsilon_{c_i}t^{-\rho(v'_i)}(s^{\psi_i}-t^{\psi_i})\delta(v'_i,x_j)
=\delta(v'_i,x_j) \epsilon(v'_i;c_i)t^{-\rho(v'_i)}(s^{\phi(v'_i)}-t^{\phi(v'_i)}).
\end{align*}
For the third term, we have 
\begin{align*}
& \epsilon_{\tau_i}t^{-\rho(\alpha_i)}(s^{\eta_i}-t^{\eta_i})\delta(\alpha_i,x_j)
=\delta(\alpha_i,x_j) \epsilon(\alpha_i;\tau_i)t^{-\rho(\alpha_i)}(s^{\phi(\alpha_i)}-t^{\phi(\alpha_i)}), \\
& \epsilon_{\tau_i}t^{-\rho(\alpha_i)}(s^{\eta_i}-t^{\eta_i})(-\delta(\gamma_i,x_j))
=\delta(\gamma_i,x_j) \epsilon(\gamma_i;\tau_i)t^{-\rho(\gamma_i)}t^{\theta_i}(s^{\eta_i}-t^{\eta_i}). \tag{3}
\end{align*}
For the last term, we have 
\begin{align*}
& \epsilon_{\tau_i}t^{-\rho(\beta_i)}(s^{\theta_i}-t^{\theta_i})\delta(\beta_i,x_j)
=\delta(\beta_i,x_j) \epsilon(\beta_i;\tau_i)t^{-\rho(\beta_i)}(s^{\phi(\beta_i)}-t^{\phi(\beta_i)}),  \\
&\epsilon_{\tau_i}t^{-\rho(\beta_i)}(s^{\theta_i}-t^{\theta_i})(-\delta(\gamma_i,x_j)s^{\eta_i})
=\delta(\gamma_i,x_j)\epsilon(\gamma_i;\tau_i)t^{-\rho(\gamma_i)}(s^{\theta_i}-t^{\theta_i})s^{\eta_i}. \tag{4}
\end{align*}
We note that 
\begin{align*}
& (1)+(2)
=\delta(v_i,x_j)\epsilon(v_i;c_i)t^{-\rho(v_i)}(s^{\phi(v_i)}-t^{\phi(v_i)}), \\
& (3)+(4)
=\delta(\gamma_i,x_j)\epsilon(\gamma_i;\tau_i)t^{-\rho(\gamma_i)}(s^{\phi(\gamma_i)}-t^{\phi(\gamma_i)}).
\end{align*}
Therefore for any $j=1,2,\ldots,2n+3k$, it follows that 
\begin{align*}
& \sum_{i=1}^{n}\epsilon_{c_i}t^{-\rho(w_i)}(s^{\phi_i}-t^{\phi_i})a_{i,j}
+\sum_{i=1}^{n}\epsilon_{c_i}t^{-\rho(v'_i)}(s^{\psi_i}-t^{\psi_i})a_{n+i,j}\\
& \quad +\sum_{i=1}^{2k}\epsilon_{\tau_i}t^{-\rho(\alpha_i)}(s^{\eta_i}-t^{\eta_i})a_{2n+i,j}
+\sum_{i=1}^{2k}\epsilon_{\tau_i}t^{-\rho(\beta_i)}(s^{\theta_i}-t^{\theta_i})a_{2n+2k+i,j}\\
&=\sum_{i=1}^{n}(\delta(u_i,x_j) \epsilon(u_i;c_i)t^{-\rho(u_i)}(s^{\phi(u_i)}-t^{\phi(u_i)})\\
& \qquad +\delta(v_i,x_j) \epsilon(v_i;c_i)t^{-\rho(v_i)}(s^{\phi(v_i)}-t^{\phi(v_i)})\\
& \qquad +\delta(v'_i,x_j) \epsilon(v'_i;c_i)t^{-\rho(v'_i)}(s^{\phi(v'_i)}-t^{\phi(v'_i)})\\
& \qquad +\delta(w_i,x_j) \epsilon(w_i;c_i)t^{-\rho(w_i)}(s^{\phi(w_i)}-t^{\phi(w_i)}))\\
& \quad + \sum_{i=1}^{2k}(\delta(\alpha_i,x_j)\epsilon(\alpha_i;\tau_i)t^{-\rho(\alpha_i)}(s^{\phi(\alpha_i)}-t^{\phi(\alpha_i)})\\
& \qquad  \quad +\delta(\beta_i,x_j)\epsilon(\beta_i;\tau_i)t^{-\rho(\beta_i)}(s^{\phi(\beta_i)}-t^{\phi(\beta_i)})\\
& \qquad  \quad +\delta(\gamma_i,x_j)\epsilon(\gamma_i;\tau_i)t^{-\rho(\gamma_i)}(s^{\phi(\gamma_i)}-t^{\phi(\gamma_i)}))\\
&= t^{-\rho(x_j)}(s^{\phi(x_j)}-t^{\phi(x_j)})-t^{-\rho(x_j)}(s^{\phi(x_j)}-t^{\phi(x_j)})\\
&= 0.
\end{align*}
\end{proof}

Let $X$ be an Alexander biquandle and let $m=\type X$.
Then $X$ is also a $\mathbb{Z}_m$-family of Alexander biquandles.
Let $D$ be an oriented classical link diagram.
We can regard $D$ as a $\mathbb{Z}_m$-flowed diagram $(D,\phi_{(1)})$ 
of an $S^1$-oriented handlebody-link 
whose components are of genus $1$, 
where $\phi_{(1)}$ is the constant map to $1$.
Hence we can regard an $X$-coloring of $D$ 
as an $X$-coloring of $(D,\phi_{(1)})$.
We define a matrix $A(D;X) \in M(2n,2n;X)$ by 
$A(D;X)=A(D,\phi_{(1)};X)$, 
where $n$ is the number of crossings of $D$.
Then the set of all $X$-colorings of $D$, 
denoted by $\col_X(D)$, 
is given by 
\begin{align*}
\col_X(D)=
\left\{ 
\begin{pmatrix}
z_1\\
z_2\\
\vdots\\
z_{2n}
\end{pmatrix}
\in X^{2n}
\middle |
A(D;X)
\begin{pmatrix}
z_1\\
z_2\\
\vdots\\
z_{2n}
\end{pmatrix}
=\bm{0}
\right\}.
\end{align*}
Therefore we obtain the following corollary.

\begin{corollary}\label{linear relationship}
Let $D$ be a diagram of an oriented classical link with the Alexander numbering 
and let $X$ be an Alexander biquandle.
Let $\bm{a}_i$ be the $i$-th row of $A(D;X)$, 
that is,
\begin{align*}
A(D;X)=(a_{i,j})=
\begin{pmatrix}
\bm{a}_1\\
\bm{a}_2\\
\vdots \\
\bm{a}_{2n}
\end{pmatrix}.
\end{align*}
Then it follows that 
\begin{align*}
& \sum_{i=1}^{n}\epsilon_{c_i}t^{-\rho(w_i)}(s-t)\bm{a}_i
+\sum_{i=1}^{n}\epsilon_{c_i}t^{-\rho(v'_i)}(s-t)\bm{a}_{n+i}=\bm{0}.
\end{align*}
\end{corollary}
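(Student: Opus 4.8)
The plan is to obtain this as an immediate specialization of Proposition \ref{linearity}. First I would note that a classical link diagram $D$, viewed as the $\mathbb{Z}_m$-flowed diagram $(D,\phi_{(1)})$ of an $S^1$-oriented handlebody-link whose components all have genus $1$, carries no trivalent vertices; in the notation set up just before Proposition \ref{linearity} this means $k=0$, so that $V(D,\phi_{(1)})=\emptyset$ and the matrix $A(D,\phi_{(1)};X)$ is the square matrix of size $2n+4k=2n+3k=2n$, which is precisely $A(D;X)$ by definition. In particular the rows $\bm{a}_1,\dots,\bm{a}_{2n}$ of $A(D;X)$ coincide with the rows $\bm{a}_1,\dots,\bm{a}_{2n+4k}$ appearing in Proposition \ref{linearity}.

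Next I would apply Proposition \ref{linearity} directly to $(D,\phi_{(1)})$. Because $k=0$, the two sums indexed by $i=1,\dots,2k$ (the terms involving $\bm{a}_{2n+i}$ and $\bm{a}_{2n+2k+i}$ that come from vertices) are empty and disappear, leaving only the first two sums over the crossings $c_1,\dots,c_n$. It then remains to evaluate the coefficients $s^{\phi_i}-t^{\phi_i}$ and $s^{\psi_i}-t^{\psi_i}$: since the flow $\phi_{(1)}$ is the constant map sending every arc to $1\in\mathbb{Z}_m$, we have $\phi_i=\phi(u_i)=\phi(w_i)=1$ and $\psi_i=\phi(v_i)=\phi(v'_i)=1$ for every crossing $c_i$, whence $s^{\phi_i}-t^{\phi_i}=s-t$ and $s^{\psi_i}-t^{\psi_i}=s-t$. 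Substituting these values into the conclusion of Proposition \ref{linearity} yields exactly
\[
\sum_{i=1}^{n}\epsilon_{c_i}t^{-\rho(w_i)}(s-t)\bm{a}_i+\sum_{i=1}^{n}\epsilon_{c_i}t^{-\rho(v'_i)}(s-t)\bm{a}_{n+i}=\bm{0},
\]
which is the asserted identity.

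I do not anticipate any real obstacle here: the full computational content of this statement — the cancellations labelled $(1)+(2)$ and $(3)+(4)$ and the telescoping of contributions along each semi-arc $x_j$ — has already been carried out in the proof of Proposition \ref{linearity}, and the only care needed is bookkeeping, namely checking that the indexing of crossings, semi-arcs and Alexander numbers used to define $A(D;X)$ is the one obtained from the general setup by setting $k=0$ and $\phi\equiv 1$. If a self-contained argument were preferred, one could instead transcribe the proof of Proposition \ref{linearity} in this simpler setting (with every $\phi_i=\psi_i=1$ and no vertex rows), but invoking the proposition is cleaner and is the approach I would take.
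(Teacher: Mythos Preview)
Your proposal is correct and matches the paper's approach exactly: the paper states the corollary immediately after defining $A(D;X)=A(D,\phi_{(1)};X)$ with the words ``Therefore we obtain the following corollary,'' giving no separate proof, so the intended argument is precisely the specialization of Proposition~\ref{linearity} to $k=0$ and $\phi\equiv 1$ that you describe. Your bookkeeping (that $k=0$ eliminates the vertex sums and that $\phi_i=\psi_i=1$ reduces $s^{\phi_i}-t^{\phi_i}$ and $s^{\psi_i}-t^{\psi_i}$ to $s-t$) is exactly what is needed.
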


\section{Main theorem}

In this section, 
we give lower bounds for the Gordian distance and the unknotting number 
of $S^1$-oriented handlebody-knots.

\begin{theorem}\label{gordian distance}
Let $H_i$ be an $S^1$-oriented handlebody-knot of genus $g$  
and let $D_i$ be a diagram of $H_i$ $(i=1,2)$.
Let $X=\mathbb{Z}_p[t^{\pm 1}]/(f(t))$ 
which is a $\mathbb{Z}_m$-family of Alexander biquandles, 
where $p$ is a prime number, $s \in \mathbb{Z}_p[t^{\pm 1}]$ 
and $f(t) \in \mathbb{Z}_p[t^{\pm 1}]$ is an irreducible polynomial.
Then it follows that
\begin{align*}
\max_{\phi_1 \in \flow (D_1;\mathbb{Z}_m)} 
\min_{\substack{\phi_2 \in \flow (D_2;\mathbb{Z}_m) \\ \gcd \phi_1=\gcd \phi_2}}
|\dim \col_X(D_1,\phi_1)-\dim \col_X(D_2,\phi_2)| \leq d(H_1,H_2).
\end{align*}
\end{theorem}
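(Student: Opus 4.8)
The plan is to reduce the inequality to a single estimate for one crossing change and then compound it along a shortest unknotting-type chain. Write $N=d(H_1,H_2)$ and fix an arbitrary $\phi_1\in\flow(D_1;\mathbb{Z}_m)$; it suffices to produce $\phi_2\in\flow(D_2;\mathbb{Z}_m)$ with $\gcd\phi_2=\gcd\phi_1$ and $|\dim\col_X(D_1,\phi_1)-\dim\col_X(D_2,\phi_2)|\le N$, since then the minimum over admissible $\phi_2$ is at most $N$ for every $\phi_1$, hence so is the maximum over $\phi_1$, which is the assertion. Realize a shortest chain of crossing changes between $H_1$ and $H_2$ at the diagram level, interpolating Reidemeister moves as necessary: there is a sequence of diagrams $E_0,E_1,\dots,E_L$ with $E_0$ a diagram of $H_1$, $E_L$ a diagram of $H_2$, each $E_{j+1}$ obtained from $E_j$ by either an R1--R6 move or a single crossing change, and with exactly $N$ of the steps crossing changes. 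By Proposition~\ref{IN17} each R1--R6 step sends the current flow to its associated flow and carries $\col_X$ bijectively, so it preserves $\dim\col_X$, and by Lemma~\ref{gcd} it preserves $\gcd$; together with Theorem~\ref{Reidemeister moves} (applied to $D_1$ and $E_0$, and to $E_L$ and $D_2$) this lets us transport $\phi_1$ to a flow on $E_0$, run the chain, and transport the resulting flow on $E_L$ to the desired $\phi_2$, all with $\dim\col_X$ and $\gcd$ unchanged. Thus the theorem follows, via the triangle inequality over the $N$ crossing-change steps, from the following: if a $\mathbb{Z}_m$-flowed diagram $(D',\phi')$ is obtained from $(D,\phi)$ by a single crossing change, then $\gcd\phi'=\gcd\phi$ and $|\dim\col_X(D,\phi)-\dim\col_X(D',\phi')|\le 1$.

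To see this, note that a crossing change alters neither the underlying projection, nor its Y-orientation, nor the set of semi-arcs; since $\mathbb{Z}_m$ is abelian the $G$-flow condition at a crossing is unaffected by switching over and under, so $\phi'=\phi$ under the obvious identification, whence $\gcd\phi'=\gcd\phi$; and the Alexander numbering is determined by the oriented projection and $\phi$ alone, not by over/under information, so all the numbers $\rho(\cdot)$ are unchanged as well. Consequently $A(D,\phi;X)$ and $A(D',\phi';X)$ have the same size $(2n+4k)\times(2n+3k)$ and agree in every row except the two, say $\bm{a}_{i_0}$ and $\bm{a}_{n+i_0}$, indexed by the switched crossing $c_{i_0}$; let $R$ be the submatrix of common rows. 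Since $\mathbb{Z}_p[t^{\pm 1}]$ is a principal ideal domain, $f(t)$ is irreducible, and $t$ is a unit, $X$ is a field, so rank--nullity gives $\dim\col_X(D,\phi)=(2n+3k)-\rank A(D,\phi;X)$, and similarly for $(D',\phi')$; as both matrices contain $R$ and differ from it in two rows, $\rank R\le\rank A(D,\phi;X),\ \rank A(D',\phi';X)\le\rank R+2$.

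To upgrade the trivial bound $\le 2$ to the sharp bound $\le 1$, apply Proposition~\ref{linearity} to $(D,\phi)$: it gives a linear dependence among the rows of $A(D,\phi;X)$ in which the coefficients of $\bm{a}_{i_0}$ and $\bm{a}_{n+i_0}$ are $\epsilon_{c_{i_0}}t^{-\rho(w_{i_0})}(s^{\phi_{i_0}}-t^{\phi_{i_0}})$ and $\epsilon_{c_{i_0}}t^{-\rho(v'_{i_0})}(s^{\psi_{i_0}}-t^{\psi_{i_0}})$, the remaining coefficients being those attached to the untouched crossings and vertices, hence identical to those for $A(D',\phi';X)$. If $s^{\phi_{i_0}}\ne t^{\phi_{i_0}}$ in $X$, that coefficient is a unit, so $\bm{a}_{i_0}$ lies in the span of the other rows and deleting it leaves $\rank A(D,\phi;X)\le\rank R+1$; if instead $s^{\psi_{i_0}}\ne t^{\psi_{i_0}}$ in $X$, then symmetrically $\rank A(D,\phi;X)\le\rank R+1$. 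These are conditions on $\phi$ and $X$ only, so Proposition~\ref{linearity} yields the same bound for $A(D',\phi';X)$; hence, unless both $s^{\phi_{i_0}}-t^{\phi_{i_0}}$ and $s^{\psi_{i_0}}-t^{\psi_{i_0}}$ vanish in $X$, we get $\rank R\le\rank A(D,\phi;X),\ \rank A(D',\phi';X)\le\rank R+1$ and therefore $|\dim\col_X(D,\phi)-\dim\col_X(D',\phi')|\le 1$. In the remaining degenerate case, at $c_{i_0}$ both $\us^{[\phi_{i_0}]}$ and $\os^{[\phi_{i_0}]}$ reduce to multiplication by the unit $t^{\phi_{i_0}}$, and both $\us^{[\psi_{i_0}]}$ and $\os^{[\psi_{i_0}]}$ to multiplication by $t^{\psi_{i_0}}$, so the coloring condition imposed at $c_{i_0}$ becomes a pair of scalar relations insensitive to the crossing type; hence $\col_X(D,\phi)=\col_X(D',\phi')$ outright. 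This completes the argument.

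The main obstacle is precisely the passage from the easy estimate ``two rows of the coloring matrix change, so $|\dim\col_X(D,\phi)-\dim\col_X(D',\phi')|\le 2$'' to the sharp $\le 1$: this is exactly what Proposition~\ref{linearity} is for, since it forces one of the two switched rows to be redundant modulo the others, and the accompanying subtlety is to isolate and dispose of the degenerate level on which $s$ and $t$ agree, where the Proposition~\ref{linearity} coefficients vanish and one must instead observe directly that the crossing has become inert. The two bookkeeping facts underpinning everything—that for a $\mathbb{Z}_m$-flow a crossing change leaves both the flow (by abelianness) and the Alexander numbering unchanged—are what guarantee that the two coloring matrices differ in a single, controlled pair of rows carrying matching Proposition~\ref{linearity} data, and they also give $\gcd\phi'=\gcd\phi$.
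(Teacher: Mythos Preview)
Your proof is correct and follows essentially the same approach as the paper: reduce to a single crossing change, compare the two coloring matrices which share all but two rows, and use Proposition~\ref{linearity} to make one of those two rows redundant so the rank (hence $\dim\col_X$) can jump by at most one. Your case split (``at least one of $s^{\phi_{i_0}}-t^{\phi_{i_0}}$, $s^{\psi_{i_0}}-t^{\psi_{i_0}}$ nonzero'' versus ``both zero'') is organized slightly differently from the paper's, and your treatment of the degenerate case---observing that when $s^a=t^a$ and $s^b=t^b$ the crossing relations literally coincide---is a clean alternative to the paper's explicit row identifications $\bm{a}_{n+1}=-\bm{\hat{a}}_1$, $\bm{a}_1=-\bm{\hat{a}}_{n+1}$, but the substance is the same.
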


\begin{proof}
Let $(D,\phi)$ be a $\mathbb{Z}_m$-flowed diagram 
of an $S^1$-oriented handlebody-knot 
and let  $C(D,\phi)=\{ c_1,\ldots,c_n \}$ 
and $V(D,\phi)=\{ \tau_1,\ldots ,\tau_{2k} \}$.
Let $(\ol{D},\ol{\phi})$ be the $\mathbb{Z}_m$-flowed diagram 
of an $S^1$-oriented handlebody-knot 
which is obtained from $(D,\phi)$ by the crossing change at $c_1$ 
and let $C(\ol{D},\ol{\phi})=\{ \ol{c}_1,\ldots, \ol{c}_n \}$ 
and $V(\ol{D},\ol{\phi})=\{ \ol{\tau}_1,\ldots ,\ol{\tau}_{2k} \}$, 
where $\ol{\phi}$, $\ol{c}_i$ and $\ol{\tau}_i$ originate from 
$\phi$, $c_i$ and $\tau_i$ naturally and respectively 
(see Figure \ref{col. crossing change}).
In the following, 
we show that 
\[ |\dim \col_X(D,\phi)-\dim \col_X(\ol{D},\ol{\phi})| \leq 1,\]
that is, 
\[ |\rank A(D,\phi;X)-\rank A(\ol{D},\ol{\phi};X)| \leq 1.\]

\begin{figure}[htb]
\begin{center}
\includegraphics[width=100mm]{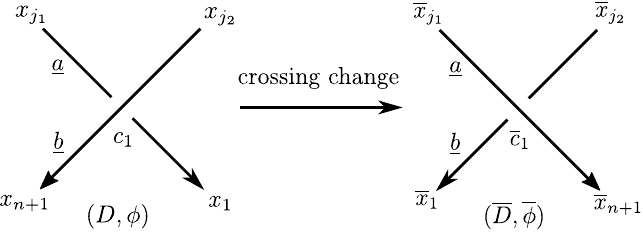}
\end{center}
\caption{The crossing change at $c_1$.}\label{col. crossing change}
\end{figure}

We may assume that $c_1$ is a positive crossing 
and $\ol{c}_1$ is a negative crossing.
We denote by $\ol{x}_i$ each semi-arc of $(\ol{D},\ol{\phi})$  
in the same way as in Figure \ref{semi-arcs} 
with respect to $\ol{c}_i$ or $\ol{\tau}_i$, 
and so are 
$\ol{v}'_i$, $\ol{w}_i$, 
$\ol{\alpha}_i$, $\ol{\beta}_i$, 
$\ol{\phi}_i$, $\ol{\psi}_i$, $\ol{\eta}_i$, $\ol{\theta}_i$, 
$\ol{\epsilon}_{c_i}$ and $\ol{\epsilon}_{\tau_i}$ 
(see Figure \ref{notation}).
We denote by $x_{j_1}$ and $x_{j_2}$ 
the semi-arcs which point to the crossing $c_1$ 
of $(D,\phi)$ 
as shown in Figure \ref{col. crossing change}, 
and we put 
$a := \phi_1=\ol{\psi}_1$ and $b := \psi_1=\ol{\phi}_1$.
We note that 
$\col_X(D,\phi)$ and $\col_X(\ol{D},\ol{\phi})$ are vector spaces over $X$
since $X$ is a field.

Let 
\begin{align*}
A(D,\phi;X)=(a_{i,j})=
\begin{pmatrix}
\bm{a}_1\\
\bm{a}_2\\
\vdots \\
\bm{a}_{2n+4k}
\end{pmatrix},~
A(\ol{D},\ol{\phi};X)=(\ol{a}_{i,j})=
\begin{pmatrix}
\bm{\ol{a}}_1\\
\bm{\ol{a}}_2\\
\vdots \\
\bm{\ol{a}}_{2n+4k}
\end{pmatrix}
\end{align*}
and let 

\begin{align*}
\hat{A}(\ol{D},\ol{\phi};X)=(\hat{a}_{i,j})=
\begin{pmatrix}
\bm{\hat{a}}_1\\
\bm{\hat{a}}_2\\
\vdots \\
\bm{\hat{a}}_{2n+4k}
\end{pmatrix},
\end{align*}

\noindent
where $\bm{\hat{a}}_i$ is a vector 
obtained by permuting the first entry and the $(n+1)$-th entry of $\bm{\ol{a}}_i$.
Then we have $\bm{a}_i=\bm{\hat{a}}_i$ when $i \neq 1,n+1$.
We note that $\rank A(\ol{D},\ol{\phi};X)=\rank \hat{A}(\ol{D},\ol{\phi};X)$ and 

\begin{align*}
\bm{a}_1 &= (-1,0,\ldots,0,\stackrel{j_1}{\stackrel{\vee}{t^b}},0,\ldots,0,\stackrel{n+1}{\stackrel{\vee}{s^b-t^b}},0,\ldots,0),\\
\bm{a}_{n+1} &=  (0,\ldots,0,\stackrel{j_2}{\stackrel{\vee}{1}},0,\ldots,0,\stackrel{n+1}{\stackrel{\vee}{-s^a}},0,\ldots,0),\\
\bm{\ol{a}}_1 &= (t^a,0,\ldots,0,\stackrel{j_1}{\stackrel{\vee}{s^a-t^a}},0,\ldots,0,\stackrel{j_2}{\stackrel{\vee}{-1}},0,\ldots,0),\\
\bm{\ol{a}}_{n+1} &= (0,\ldots,0,\stackrel{j_1}{\stackrel{\vee}{-s^b}},0,\ldots,0,\stackrel{n+1}{\stackrel{\vee}{1}},0,\ldots,0),\\
\bm{\hat{a}}_1 &= (0,\ldots,0,\stackrel{j_1}{\stackrel{\vee}{s^a-t^a}},0,\ldots,0,\stackrel{j_2}{\stackrel{\vee}{-1}},0,\ldots,0,\stackrel{n+1}{\stackrel{\vee}{t^a}},0,\ldots,0),\\
\bm{\hat{a}}_{n+1} &= (1,0,\ldots,0,\stackrel{j_1}{\stackrel{\vee}{-s^b}},0,\ldots,0).
\end{align*}

\noindent
By Proposition \ref{linearity}, 
we obtain 
\begin{align*}
& \sum_{i=1}^{n}\epsilon_{c_i}t^{-\rho(w_i)}(s^{\phi_i}-t^{\phi_i})\bm{a}_i
+\sum_{i=1}^{n}\epsilon_{c_i}t^{-\rho(v'_i)}(s^{\psi_i}-t^{\psi_i})\bm{a}_{n+i}\\
&+\sum_{i=1}^{2k}\epsilon_{\tau_i}t^{-\rho(\alpha_i)}(s^{\eta_i}-t^{\eta_i})\bm{a}_{2n+i}
+\sum_{i=1}^{2k}\epsilon_{\tau_i}t^{-\rho(\beta_i)}(s^{\theta_i}-t^{\theta_i})\bm{a}_{2n+2k+i}=\bm{0}
\end{align*}
and  
\begin{align*}
& \sum_{i=1}^{n}\ol{\epsilon}_{c_i}t^{-\rho(\ol{w}_i)}(s^{\ol{\phi}_i}-t^{\ol{\phi}_i})\bm{\ol{a}}_i
+\sum_{i=1}^{n}\ol{\epsilon}_{c_i}t^{-\rho(\ol{v}'_i)}(s^{\ol{\psi}_i}-t^{\ol{\psi}_i})\bm{\ol{a}}_{n+i}\\
& \quad +\sum_{i=1}^{2k}\ol{\epsilon}_{\tau_i}t^{-\rho(\ol{\alpha}_i)}(s^{\ol{\eta}_i}-t^{\ol{\eta}_i})\bm{\ol{a}}_{2n+i}
+\sum_{i=1}^{2k}\ol{\epsilon}_{\tau_i}t^{-\rho(\ol{\beta}_i)}(s^{\ol{\theta}_i}-t^{\ol{\theta}_i})\bm{\ol{a}}_{2n+2k+i}\\
&= \sum_{i=1}^{n}\ol{\epsilon}_{c_i}t^{-\rho(\ol{w}_i)}(s^{\ol{\phi}_i}-t^{\ol{\phi}_i})\bm{\hat{a}}_i
+\sum_{i=1}^{n}\ol{\epsilon}_{c_i}t^{-\rho(\ol{v}'_i)}(s^{\ol{\psi}_i}-t^{\ol{\psi}_i})\bm{\hat{a}}_{n+i}\\
& \quad +\sum_{i=1}^{2k}\ol{\epsilon}_{\tau_i}t^{-\rho(\ol{\alpha}_i)}(s^{\ol{\eta}_i}-t^{\ol{\eta}_i})\bm{\hat{a}}_{2n+i}
+\sum_{i=1}^{2k}\ol{\epsilon}_{\tau_i}t^{-\rho(\ol{\beta}_i)}(s^{\ol{\theta}_i}-t^{\ol{\theta}_i})\bm{\hat{a}}_{2n+2k+i}
=\bm{0}.
\end{align*}

\noindent
If $\epsilon_{c_1}t^{-\rho(w_1)}(s^{\phi_1}-t^{\phi_1})=0$, 
we have $s^{\phi_1}-t^{\phi_1}=s^a-t^a=0$, 
which implies that $\bm{a}_{n+1}=-\bm{\hat{a}}_1$.
Hence it follows that 
\begin{align*}
|\rank A(D,\phi;X)-\rank A(\ol{D},\ol{\phi};X)|
= |\rank A(D,\phi;X)-\rank \hat{A}(\ol{D},\ol{\phi};X)| \leq 1.
\end{align*}

\noindent
If $\ol{\epsilon}_{c_1}t^{-\rho(\ol{w}_1)}(s^{\ol{\phi}_1}-t^{\ol{\phi}_1})=0$,
we have $s^{\ol{\phi}_1}-t^{\ol{\phi}_1}=s^b-t^b=0$, 
which implies that $\bm{a}_1=-\bm{\hat{a}}_{n+1}$.
Hence it follows that 
\begin{align*}
|\rank A(D,\phi;X)-\rank A(\ol{D},\ol{\phi};X)|
=|\rank A(D,\phi;X)-\rank \hat{A}(\ol{D},\ol{\phi};X)| \leq 1.
\end{align*}

\noindent
If $\epsilon_{c_1}t^{-\rho(w_1)}(s^{\phi_1}-t^{\phi_1}) \neq 0$ and 
$\ol{\epsilon}_{c_1}t^{-\rho(\ol{w}_1)}(s^{\ol{\phi}_1}-t^{\ol{\phi}_1}) \neq 0$,
we can represent $\bm{a}_1$ and $\bm{\ol{a}}_1$ 
as linear combinations of $\bm{a}_2, \ldots, \bm{a}_{2n+4k}$ 
and $\bm{\ol{a}}_2, \ldots, \bm{\ol{a}}_{2n+4k}$ respectively.
Hence it follows that 

\begin{align*}
\rank A(D,\phi;X)=\rank
\begin{pmatrix}
\bm{a}_2\\
\vdots\\
\bm{a}_{2n+4k}
\end{pmatrix},~
\rank A(\ol{D},\ol{\phi};X)=\rank
\begin{pmatrix}
\bm{\ol{a}}_2\\
\vdots\\
\bm{\ol{a}}_{2n+4k}
\end{pmatrix},
\end{align*}
which implies that 
\begin{align*}
|\rank A(D,\phi;X)-\rank A(\ol{D},\ol{\phi};X)|
&=\left |\rank
\begin{pmatrix}
\bm{a}_2\\
\vdots\\
\bm{a}_{2n+4k}
\end{pmatrix}
-\rank
\begin{pmatrix}
\bm{\ol{a}}_2\\
\vdots\\
\bm{\ol{a}}_{2n+4k}
\end{pmatrix} \right |\\
&=\left |\rank
\begin{pmatrix}
\bm{a}_2\\
\vdots\\
\bm{a}_{2n+4k}
\end{pmatrix}
-\rank
\begin{pmatrix}
\bm{\hat{a}}_2\\
\vdots\\
\bm{\hat{a}}_{2n+4k}
\end{pmatrix} \right |\\
&\leq 1.
\end{align*}

Consequently, 
if we can deform $H_1$ into $H_2$ 
by crossing changes at $l$ crossings, 
then for any $\mathbb{Z}_m$-flowed diagram $(D_1,\phi_1)$ of $H_1$, 
there exists a $\mathbb{Z}_m$-flowed diagram $(D_2,\phi_2)$ of $H_2$ 
satisfying $\gcd \phi_1=\gcd \phi_2$ and 
\[ | \dim \col_X(D_1,\phi_1) - \dim \col_X(D_2,\phi_2) | \leq l \]
by Lemma \ref{gcd}.
Therefore 
it follows that 
\begin{align*}
\max_{\phi_1 \in \flow (D_1;\mathbb{Z}_m)} 
\min_{\substack{\phi_2 \in \flow (D_2;\mathbb{Z}_m) \\ \gcd \phi_1=\gcd \phi_2}}
|\dim \col_X(D_1,\phi_1)-\dim \col_X(D_2,\phi_2)| \leq d(H_1,H_2).
\end{align*}
\end{proof}

By Proposition \ref{trivial col.} and Theorem \ref{gordian distance}, 
the following corollary holds immediately.

\begin{corollary}\label{unknotting number}
Let $H$ be an $S^1$-oriented handlebody-knot 
and let $D$ be a diagram of $H$.
Let $X=\mathbb{Z}_p[t^{\pm 1}]/(f(t))$ 
which is a $\mathbb{Z}_m$-family of Alexander biquandles, 
where $p$ is a prime number, $s \in \mathbb{Z}_p[t^{\pm 1}]$ 
and $f(t) \in \mathbb{Z}_p[t^{\pm 1}]$ is an irreducible polynomial.
Then it follows that 
\begin{align*}
\max_{\phi \in \flow (D;\mathbb{Z}_m)} 
\dim \col_X(D,\phi)-1 \leq u(H).
\end{align*}
\end{corollary}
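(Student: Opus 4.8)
The plan is to deduce Corollary~\ref{unknotting number} from Theorem~\ref{gordian distance} by specializing the second handlebody-knot to the trivial one of the same genus $g$. First I would set $H_1 = H$, $D_1 = D$, and take $H_2 = O$ the $S^1$-oriented trivial handlebody-knot of genus $g$, with $D_2$ a diagram of $O$ having no crossings (which exists by definition of triviality). Since $u(H) = d(H,O)$ by definition, Theorem~\ref{gordian distance} immediately gives
\begin{align*}
\max_{\phi \in \flow (D;\mathbb{Z}_m)}
\min_{\substack{\psi \in \flow (D_2;\mathbb{Z}_m) \\ \gcd \phi = \gcd \psi}}
|\dim \col_X(D,\phi) - \dim \col_X(D_2,\psi)| \leq u(H).
\end{align*}

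The remaining work is to show that the inner minimum equals $\dim\col_X(D,\phi) - 1$ for every $\phi$, which amounts to two facts about the crossingless diagram $D_2$ of $O$. The first is that $\dim \col_X(D_2,\psi) = 1$ for \emph{every} $\mathbb{Z}_m$-flow $\psi$ of $D_2$: this is exactly Proposition~\ref{trivial col.}(2) with $m=1$ (one component), which gives $\#\col_X(D_2,\psi) = \#X$, so since $X$ is a field and $\col_X(D_2,\psi)$ is a vector space over $X$, its dimension is $1$. The second is that the constraint $\gcd\phi = \gcd\psi$ can actually be met: given $\phi$, one needs at least one $\mathbb{Z}_m$-flow $\psi$ of $D_2$ with $\gcd\psi = \gcd\phi$. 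Here I would invoke Lemma~\ref{gcd} together with the fact that $D$ and $D_2$ both represent handlebody-knots of genus $g$ that are related by crossing changes; alternatively, since $D_2$ is crossingless, a $\mathbb{Z}_m$-flow of $D_2$ is just an assignment of elements of $\mathbb{Z}_m$ to the $g$ core loops subject to the vertex conditions, and one can directly exhibit a flow realizing any prescribed $\gcd$ value dividing $m$ (for instance the constant flow with value $\gcd\phi$, noting $\gcd\phi \mid m$). Since every such $\psi$ already has $\dim\col_X(D_2,\psi)=1$ regardless, the inner minimum over the nonempty set $\{\psi : \gcd\psi = \gcd\phi\}$ is simply $|\dim\col_X(D,\phi) - 1|$, and for the outer maximum we may assume $\dim\col_X(D,\phi) \geq 1$ (indeed $\geq \dim\col_X(D_2,\psi) = 1$ always holds by Proposition~\ref{trivial col.}(1) applied suitably, or simply because the constant coloring always exists), so the absolute value drops and we get $\dim\col_X(D,\phi) - 1 \leq u(H)$ for every $\phi$, hence for the maximum.

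The main obstacle I anticipate is the bookkeeping around the $\gcd$ constraint: one must be careful that the inner minimization set is nonempty for the given $\phi$, i.e., that some flow of the trivial diagram has the same $\gcd$. This is morally forced by Lemma~\ref{gcd} (crossing changes preserve $\gcd$, and $H$ is connected to $O$ by crossing changes, so the flow associated to $\phi$ along such a path lands on a flow of $D_2$ with the same $\gcd$), but writing it cleanly requires tracking associated flows through the sequence of crossing changes and Reidemeister moves. Everything else is immediate: the triviality of $\col_X$ on crossingless diagrams is Proposition~\ref{trivial col.}(2), the vector space structure over the field $X = \mathbb{Z}_p[t^{\pm1}]/(f(t))$ is already noted, and the passage from Gordian distance to unknotting number is just the definition $u(H) = d(H,O)$.
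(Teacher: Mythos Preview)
Your proposal is correct and follows the same route the paper intends: the paper simply states that the corollary is immediate from Proposition~\ref{trivial col.} and Theorem~\ref{gordian distance}, and you have spelled out exactly those details (specialize $H_2=O$, use $\dim\col_X(D_2,\psi)=1$ from Proposition~\ref{trivial col.}(2), and drop the absolute value via Proposition~\ref{trivial col.}(1)). One small caution: your parenthetical ``or simply because the constant coloring always exists'' is not quite right for genuine biquandles, since when $s\neq 1$ the map $a\mapsto a\os b=sa$ is not the identity and a literally constant assignment is not a coloring; rely on Proposition~\ref{trivial col.}(1) for the bound $\dim\col_X(D,\phi)\geq 1$ rather than on constant colorings.
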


\section{Examples}

In this section, we give some examples.
In Example \ref{ex. Iwakiri}, 
we give a handlebody-knot
with unknotting number $2$, 
and in Remark \ref{rem. Iwakiri}, 
we note that 
it can not be obtained by using Alexander quandle colorings 
with $\mathbb{Z}_2, \mathbb{Z}_3$-flows introduced in \cite{Iwakiri15}.
In Example \ref{ex. unknotting number}, 
we give three handlebody-knots 
with unknotting number $n$ for any $n \in \mathbb{Z}_{>0}$.
In Example \ref{ex. gordian distance}, 
we give two handlebody-knots 
with their Gordian distance $n$ for any $n \in \mathbb{Z}_{>0}$.

\begin{example}\label{ex. Iwakiri}
Let $H$ be the handlebody-knot 
represented by the $\mathbb{Z}_{10}$-flowed diagram $(D,\phi)$
depicted in Figure \ref{H}.
Then we show that $u(H)=2$.

Let $s=1 \in \mathbb{Z}_3[t^{\pm 1}]$ 
and let $f(t)=t^4+2t^3+t^2+2t+1 \in \mathbb{Z}_3[t^{\pm 1}]$, 
which is an irreducible polynomial.
Then $X:=\mathbb{Z}_3[t^{\pm 1}]/(f(t))$ is 
a $\mathbb{Z}_{10}$-family of Alexander biquandles.
Then 
for any $x,y,z \in X$, 
the assignment of them to each semi-arc of $(D,\phi)$ 
as shown in Figure \ref{H} 
is an $X$-coloring of $(D,\phi)$, 
which implies $\dim \col_X(D,\phi) \geq 3$.
By Corollary \ref{unknotting number}, 
we obtain $2 \leq u(H)$.
On the other hand, 
we can deform $H$ into a trivial handlebody-knot 
by the crossing changes 
at two crossings surrounded by dotted circles depicted in Figure \ref{H}.
Therefore it follows that $u(H)=2$.
\end{example}

\begin{figure}[htb]
\begin{center}
\includegraphics[width=70mm]{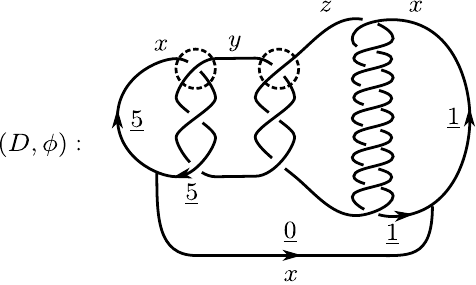}
\end{center}
\caption{A $\mathbb{Z}_{10}$-flowed diagram $(D,\phi)$ of $H$.}\label{H}
\end{figure}

\begin{remark}\label{rem. Iwakiri}
We show that 
the result in Example \ref{ex. Iwakiri} 
can not be obtained by using Alexander quandle colorings 
with $\mathbb{Z}_2, \mathbb{Z}_3$-flows introduced in \cite{Iwakiri15}.

Let $H$ be the handlebody-knot 
represented by the $\mathbb{Z}_m$-flowed diagram $(D,\phi(a,b))$
depicted in Figure \ref{H-2} for any $m=2,3$ and $a,b \in \mathbb{Z}_m$.
Let $p$ be a prime number, 
$s=1 \in \mathbb{Z}_p[t^{\pm 1}]$, 
$f(t)$ be an irreducible polynomial in $\mathbb{Z}_p[t^{\pm 1}]$ 
and let $X=\mathbb{Z}_p[t^{\pm 1}]/(f(t))$ which is 
a $\mathbb{Z}_m$-family of Alexander (bi)quandles.
We note that 
$\col_X(D,\phi(a,b))$ is generated by 
$x,y,z \in X$ 
as shown in Figure \ref{H-2} 
for any $m=2,3$ and $a, b \in \mathbb{Z}_m$.
If $(a,b)=(1,0)$, 
$x$, $y$ and $z$ need to satisfy the following relations: 
\begin{align*}
& (t^2-t+1)x-(t^2-t+1)y=0,\\
& -t(t^2-t+1)x+t^{-1}(t+1)(t-1)(t^2-t+1)y+t^{-1}(t^2-t+1)z=0,\\
& -t^{-1}(t-1)(t^2-t+1)x+t^{-2}(t^2-t-1)(t^2-t+1)y+t^{-2}(t^2-t+1)z=0,\\
& ((t^3+t^2-1)(t^2-t+1)-t)x-((t^3+t^2-1)(t^2-t+1)-t)z=0,
\end{align*}
that is, 
\begin{align*}
M
\begin{pmatrix}
x\\y\\z
\end{pmatrix}
=
\begin{pmatrix}
0\\0\\0\\0
\end{pmatrix},
\end{align*}
where 
\begin{align*}
M =
{\fontsize{9pt}{4mm}\selectfont
\begin{pmatrix}
t^2-t+1 & -(t^2-t+1) & 0\\
-t(t^2-t+1) & t^{-1}(t+1)(t-1)(t^2-t+1) & t^{-1}(t^2-t+1)\\
-t^{-1}(t-1)(t^2-t+1) & t^{-2}(t^2-t-1)(t^2-t+1) & t^{-2}(t^2-t+1)\\
(t^3+t^2-1)(t^2-t+1)-t & 0 & -(t^3+t^2-1)(t^2-t+1)+t
\end{pmatrix}}.
\end{align*}
These relations are obtained from crossings $c_1,c_2,c_3$ and $c_4$ 
as shown in Figure \ref{H-2}.
When $t^2-t+1 \neq 0$ in $X$, 
it is clearly that $\rank M \geq 1$.
When $t^2-t+1 = 0$ in $X$, 
we have 
\begin{align*}
M=
\begin{pmatrix}
0 & 0 & 0\\
0 & 0 & 0\\
0 & 0 & 0\\
-t & 0 & t
\end{pmatrix},
\end{align*}
which implies that 
$\rank M = 1$.
Hence we have 
$\dim \col_X(D,\phi(1,0)) = 3-\rank M \leq 2$.
Therefore we can not obtain $2 \leq u(H)$.

We can prove the remaining cases in the same way.
\end{remark}

\begin{figure}[htb]
\begin{center}
\includegraphics[width=80mm]{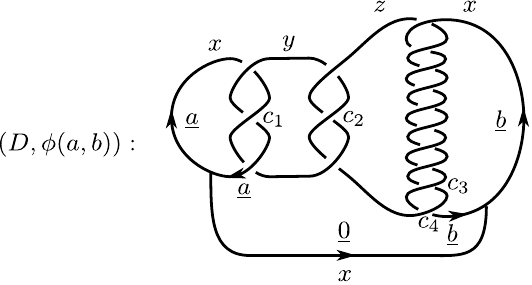}
\end{center}
\caption{A $\mathbb{Z}_m$-flowed diagram $(D,\phi(a,b))$ of $H$.}\label{H-2}
\end{figure}

\begin{example}\label{ex. unknotting number}
Let $A_n$, $B_n$ and $C_n$ be the handlebody-knots 
represented by the $\mathbb{Z}_8$-flowed diagram $(D_{A_n},\phi_{A_n})$, 
the $\mathbb{Z}_{24}$-flowed diagram $(D_{B_n},\phi_{B_n})$ and 
the $\mathbb{Z}_8$-flowed diagram $(D_{C_n},\phi_{C_n})$ 
depicted in Figure \ref{A_n}, \ref{B_n} and \ref{C_n} respectively 
for any $n \in \mathbb{Z}_{>0}$.
Then we show that 
$u(A_n)=u(B_n)=u(C_n)=n$.

\begin{enumerate}
\item
Let $s=t+1 \in \mathbb{Z}_3[t^{\pm 1}]$ 
and let $f(t)=t^2+t+2 \in \mathbb{Z}_3[t^{\pm 1}]$, which is an irreducible polynomial.
Then $X:=\mathbb{Z}_3[t^{\pm 1}]/(f(t))$ is 
a $\mathbb{Z}_8$-family of Alexander biquandles.
Then for any $x_0,x_1,\ldots,x_n \in X$, 
the assignment of them to each semi-arc of $(D_{A_n},\phi_{A_n})$ 
as shown in Figure \ref{A_n} 
is an $X$-coloring of $(D_{A_n},\phi_{A_n})$, 
which implies $\dim \col_X(D_{A_n},\phi_{A_n}) \geq n+1$.
By Corollary \ref{unknotting number}, 
we obtain $n \leq u(A_n)$.
On the other hand, 
we can deform $A_n$ into a trivial handlebody-knot 
by the crossing changes 
at $n$ crossings surrounded by dotted circles depicted in Figure \ref{A_n}.
Therefore it follows that $u(A_n)=n$.

\begin{figure}[htb]
\begin{center}
\includegraphics[width=95mm]{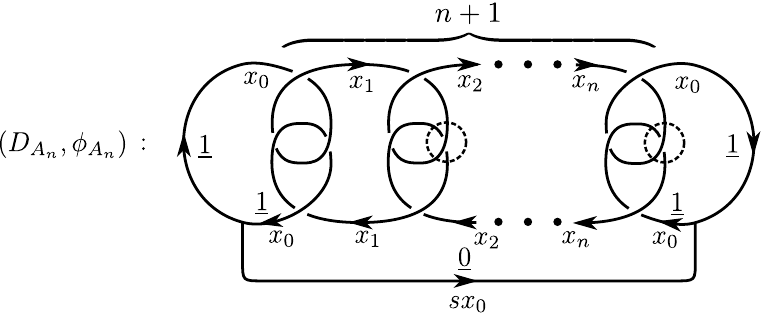}
\end{center}
\caption{A $\mathbb{Z}_8$-flowed diagram $(D_{A_n},\phi_{A_n})$ of $A_n$.}\label{A_n}
\end{figure}

\item
Let $s=t^2+1 \in \mathbb{Z}_5[t^{\pm 1}]$ 
and let $f(t)=t^2+2t+4 \in \mathbb{Z}_5[t^{\pm 1}]$, which is an irreducible polynomial.
Then $X:=\mathbb{Z}_5[t^{\pm 1}]/(f(t))$ is 
a $\mathbb{Z}_{24}$-family of Alexander biquandles.
Then for any $x_0,x_1,\ldots,x_n \in X$, 
the assignment of them to each semi-arc of $(D_{B_n},\phi_{B_n})$ 
as shown in Figure \ref{B_n} 
is an $X$-coloring of $(D_{B_n},\phi_{B_n})$, 
which implies $\dim \col_X(D_{B_n},\phi_{B_n}) \geq n+1$.
By Corollary \ref{unknotting number}, 
we obtain $n \leq u(B_n)$.
On the other hand, 
we can deform $B_n$ into a trivial handlebody-knot 
by the crossing changes 
at $n$ crossings surrounded by dotted circles depicted in Figure \ref{B_n}.
Therefore it follows that $u(B_n)=n$.

\begin{figure}[htb]
\begin{center}
\includegraphics[width=95mm]{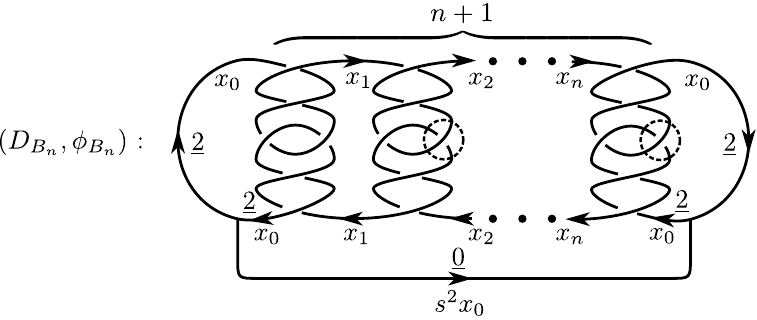}
\end{center}
\caption{A $\mathbb{Z}_{24}$-flowed diagram $(D_{B_n},\phi_{B_n})$ of $B_n$.}\label{B_n}
\end{figure}

\item
Let $s=2t-1 \in \mathbb{Z}_3[t^{\pm 1}]$ 
and let $f(t)=t^2+t+2 \in \mathbb{Z}_3[t^{\pm 1}]$, which is an irreducible polynomial.
Then $X:=\mathbb{Z}_3[t^{\pm 1}]/(f(t))$ is 
a $\mathbb{Z}_8$-family of Alexander biquandles.
Then for any $x_0,x_1,\ldots,x_n \in X$, 
the assignment of them to each semi-arc of $(D_{C_n},\phi_{C_n})$ 
as shown in Figure \ref{C_n} 
is an $X$-coloring of $(D_{C_n},\phi_{C_n})$, 
which implies $\dim \col_X(D_{C_n},\phi_{C_n}) \geq n+1$.
By Corollary \ref{unknotting number}, 
we obtain $n \leq u(C_n)$.
On the other hand, 
we can deform $C_n$ into a trivial handlebody-knot 
by the crossing changes 
at $n$ crossings surrounded by dotted circles depicted in Figure \ref{C_n}.
Therefore it follows that $u(C_n)=n$.

\begin{figure}[htb]
\begin{center}
\includegraphics[width=95mm]{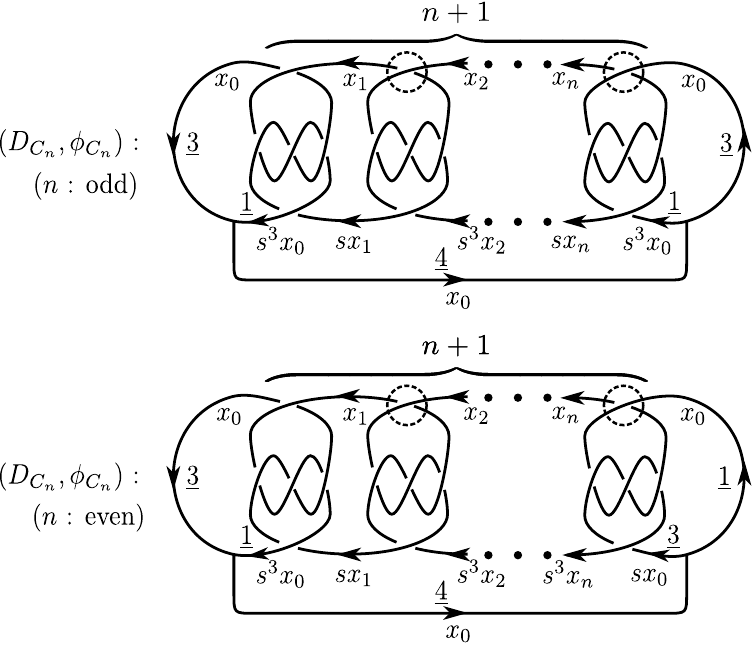}
\end{center}
\caption{A $\mathbb{Z}_8$-flowed diagram $(D_{C_n},\phi_{C_n})$ of $C_n$.}\label{C_n}
\end{figure}
\end{enumerate}
\end{example}

\begin{example}\label{ex. gordian distance}
Let $H_n$ and $H'_n$ be the handlebody-knots 
represented by the $\mathbb{Z}_3$-flowed diagrams 
$(D_n,\phi_n)$ and $(D'_n,\phi'_n(a,b))$ respectively 
depicted in Figure \ref{H_n,H'_n} 
for any $n \in \mathbb{Z}_{>0}$ and $a,b \in \mathbb{Z}_3$.
Then we show that 
$d(H_n,H'_n)=n$.

Let $s=1 \in \mathbb{Z}_2[t^{\pm 1}]$ 
and let $f(t)=t^2+t+1 \in \mathbb{Z}_2[t^{\pm 1}]$, which is an irreducible polynomial.
Then $X:=\mathbb{Z}_2[t^{\pm 1}]/(f(t))$ is 
a $\mathbb{Z}_3$-family of Alexander (bi)quandles.
Then for any $x_0,x_1,\ldots,x_n, y_1, \ldots, y_n \in X$, 
the assignment of them to each semi-arc of $(D_n,\phi_n)$ 
as shown in Figure \ref{H_n,H'_n} 
is an $X$-coloring of $(D_n,\phi_n)$, 
which implies $\dim \col_X(D_n,\phi_n) \geq 2n+1$.

On the other hand, 
we note that  
$\col_X(D'_n,\phi'_n(a,b))$ is generated by 
$x_0,x_1,x'_1,\ldots, x_n,$ $x'_n, y_1,y'_1,\ldots, y_n,y'_n \in X$ 
as shown in Figure \ref{H_n,H'_n} 
for any $a, b \in \mathbb{Z}_3$.
If $(a,b)=(0,0)$, 
it is easy to see that $\dim \col_X(D'_n,\phi'_n(a,b))=1$.
If $(a,b)=(1,1), (1,2), (2,1), (2,2)$, 
we obtain that $x_i=x'_i=y_i=y'_i$ for any $i=1,2,\ldots, n$, 
which implies $\dim \col_X(D'_n,\phi'_n(a,b)) \leq n+1$.
If $(a,b)=(0,1), (0,2)$,
we have 
\begin{align*}
& x_0=x_1=x_2,\\
& x_{i+2}=x'_i ~(i=1,2,\ldots, n-2),\\
& x'_i=
\begin{cases}
x_i \us^b y'_i ~(i:\mathrm{odd}),\\
x_i \us^{-b} y'_i ~(i:\mathrm{even}),
\end{cases}\\
& x_n=x'_{n-1},\\
& y_i=y'_i ~(i=1,2,\ldots, n).
\end{align*}
Hence $\col_X(D'_n,\phi'_n(a,b))$ is generated by $x_0,y_1,\ldots, y_n \in X$, 
which implies $\dim \col_X(D'_n,\phi'_n(a,b)) \leq n+1$.
If $(a,b)=(1,0), (2,0)$, 
in the same way as when $(a,b)=(0,1), (0,2)$, 
$\col_X(D'_n,\phi'_n(a,b))$ is generated by $x_0,x_1,\ldots x_n \in X$, 
which implies $\dim \col_X(D'_n,\phi'_n(a,b)) \leq n+1$.
Hence for any $a,b \in \mathbb{Z}_3$, 
$\dim \col_X(D'_n,\phi'_n(a,b)) \leq n+1$, 
which implies that 
\begin{align*}
\dim \col_X(D_n,\phi_n)-\dim \col_X(D'_n,\phi'_n(a,b)) \geq n.
\end{align*}
By Theorem \ref{gordian distance}, 
it follows that $n \leq d(H_n,H'_n)$.

Finally, 
we can deform $H'_n$ into $H_n$ by the crossing changes 
at $n$ crossings surrounded by dotted circles depicted in Figure \ref{H_n,H'_n}.
Therefore it follows that $d(H_n,H'_n)=n$.

\end{example}

\begin{figure}[htb]
\begin{center}
\includegraphics[width=100mm]{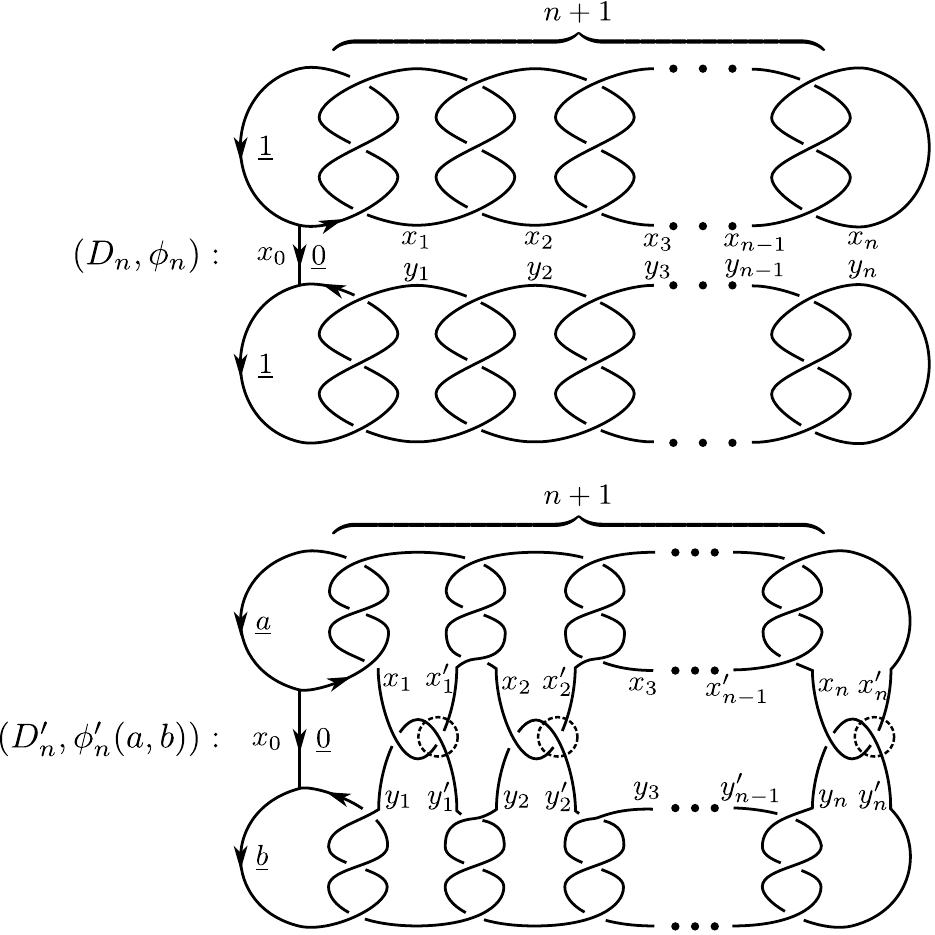}
\end{center}
\caption{$\mathbb{Z}_3$-flowed diagrams $(D_n,\phi_n)$ and $(D'_n,\phi'_n)$ of $H_n$ and $H'_n$.}\label{H_n,H'_n}
\end{figure}

\section*{Acknowledgment}
The author would like to thank Masahide Iwakiri for his helpful comments.
He is particularly grateful to Atsushi Ishii for invaluable advice and his suggestions.



\begin{thebibliography}{99}

\bibitem{Alexander23}
J. W. Alexander, 
\textit{A lemma on systems of knotted curves}, 
Proc. Natl. Acad. Sci. USA \textbf{9}(1923), 93--95.

\bibitem{CESY14}
W. E. Clark, M. Elhamdadi, M. Saito and T. Yeatman, 
\textit{Quandle colorings of knots and applications}, 
J. Knot Theory Ramifications \textbf{23}(2014), 1450035, 23 pp.

\bibitem{FRS95}
R. Fenn, C. Rourke and B. Sanderson, 
\textit{Trunks and classifying spaces},
Appl. Categ. Structure \textbf{3}(1995), 321--356.

\bibitem{HU02}
M. Hirasawa and Y. Uchida, 
\textit{The Gordian complex of knots}, 
J. Knot Theory Ramifications \textbf{11}(2002), 363--368.

\bibitem{Ishii08}
A. Ishii,
\textit{Moves and invariants for knotted handlebodies},
Algebr. Geom. Topol. \textbf{8}(2008), 1403--1418.

\bibitem{Ishii15}
A. Ishii,
\textit{A multiple conjugation quandle and handlebody-knots},
Topology Appl. \textbf{196}(2015), 492--500.

\bibitem{Ishii15-2}
A. Ishii, 
\textit{The Markov theorems for spatial graphs and handlebody-knots with Y-orientations},
Internat. J. Math. \textbf{26}(2015), 1550116, 23 pp. 

\bibitem{II12}
A. Ishii and M. Iwakiri, 
\textit{Quandle cocycle invariants for spatial graphs and knotted handlebodies},
Canad. J. Math. \textbf{64}(2012), 102--122. 

\bibitem{IIJO13}
A. Ishii, M. Iwakiri, Y. Jang, K. Oshiro, 
\textit{A $G$-family of quandles and handlebody-knots}, 
Ill. J. Math. \textbf{57}(2013), 817--838.

\bibitem{IIKKMOpre}
A. Ishii, M. Iwakiri, S. Kamada, J. Kim, S. Matsuzaki and K. Oshiro, 
\textit{A multiple conjugation biquandle and handlebody-links},
preprint.

\bibitem{IK10}
A. Ishii and K. Kishimoto
\textit{The IH-complex of spatial trivalent graphs},
Tokyo. J. Math. \textbf{33}(2010), 523--535.

\bibitem{IKMS12}
A. Ishii, K. Kishimoto, H. Moriuchi and M. Suzuki, 
\textit{A table of genus two handlebody- knots up to six crossings}, 
J. Knot Theory Ramifications \textbf{21}(2012), 1250035, 1--9.

\bibitem{IN17}
A. Ishii and S. Nelson, 
\textit{Partially multiplicative biquandles and handlebody-knots}, 
to appear in Contemporary Mathematics.

\bibitem{Iwakiri15}
M. Iwakiri,
\textit{Unknotting numbers for handlebody-knots and Alexander quandle colorings},
J. Knot Theory Ramifications \textbf{24}(2015), 1550059, 13 pp.

\bibitem{Joyce82}
D. Joyce,
\textit{A classifying invariant of knots, the knot quandle},
J. Pure Appl. Alg. \textbf{23}(1982), 37--65.

\bibitem{Matveev82}
S. V. Matvee,
\textit{Distributive groupoids in knot theory},
Mt. Sb. (N.S.)  \textbf{119(161)}(1982), 78--88.

\bibitem{Murao16}
T. Murao, 
\textit{On bind maps for braids},
J. Knot Theory Ramifications \textbf{25}(2016), 1650004, 25 pp.

\bibitem{Nakanishi81}
Y. Nakanishi, 
\textit{A note on unknotting number}, 
Math. Sem. Notes, Kobe Univ. \textbf{9}(1981), 99--108.

\bibitem{Przytycki98}
J. Przytycki, 
\textit{3-coloring and other elementary invariants of knots}, 
Banach Center Publ. \textbf{42}(1998), 275--295.

\end{thebibliography}
\end{document}